\documentclass[12pt]{amsart}
\usepackage{amsmath,amsthm,amscd,amssymb,bbm,graphicx}
\usepackage{floatflt,setspace, wrapfig}
\usepackage{graphics,multicol}
\usepackage{mathrsfs} 
\usepackage{epstopdf}
\usepackage{geometry}

\setlength{\textheight}{22cm}
\setlength{\textwidth}{16cm}
\setlength{\oddsidemargin}{0mm}
\setlength{\evensidemargin}{0mm}
\setlength{\topmargin}{-10mm}

\newtheorem{thrm}{Theorem}

\newtheorem*{cor}{Corollary}
\newtheorem{prop}{Proposition}
\newtheorem{lem}{Lemma}
\newtheorem*{cdn}{Condition}

\newcommand{\maps}{\rightarrow}
\newcommand{\nats}{\mathbb{N}_0}
\newcommand{\ind}{\mathbbm{1}}
\newcommand{\ev}{\mathbb{E}}
\newcommand{\ball}{B_{\rho}}
\newcommand{\diam}{\mbox{diam }}

\providecommand{\abs}[1]{\lvert \, #1 \, \rvert}
\providecommand{\Abs}[1]{\biggl\lvert \, #1 \, \biggr \rvert}
\providecommand{\norm}[1]{\lVert \, #1 \, \rVert}
\providecommand{\floor}[1]{\left\lfloor \, #1 \, \right\rfloor}
\providecommand{\roof}[1]{\left\lceil \, #1 \, \right\rceil}

\newcommand{\bea}[1]{\begin{eqnarray}\label{#1}}
\newcommand{\eea}{\end{eqnarray}}


\title[Poisson Law for returns of  Maps  on Compact Manifolds]{Poisson Law for returns of  Maps  on Compact Manifolds}
 \date{\today}
 \begin{document}
 \maketitle
\authors{Nicolai T A Haydn\footnote{Department of Mathematics, University of Southern California,
Los Angeles, 90089-2532. E-mail: {\tt \email{nhaydn@usc.edu}}.},
Fan Yang\footnote{Department of Mathematics, University of Southern California,
Los Angeles, 90089-2532. E-mail: {\tt \email{fizbayang@gmail.com}}.}}


\begin{abstract}

We consider invariant measures of maps on manifolds whose correlations decay
at a sufficient rate and which satisfy a geometric contraction property.
We then prove the that the limiting distribution of returns to geometric balls
is Poissonian. This does not assume an tower construction.
The decay of correlations is used to show that the independence generated 
results in the Poisson distribution for returns that are sufficiently separated.
A geometric contraction property is then used to show that short return times
have a vanishing contribution to the return times distribution. We then also 
show that the set of very short returns which are of a small linear order
of the logarithm of the radius of the balls has a vanishing measure. 
We obtain error terms which decay polynomially in the logarithm of the 
radius. We also obtain a extreme value law for such systems. 

\end{abstract}

\tableofcontents

\section{Introduction}

The limiting distribution of higher order return times in dynamics 
goes back to Doeblin~\cite{Doe} who establised the Poisson distribution in the 
the limit for the Gauss map at the origin. In more recent times there has
been a large number of results for returns to cylinder sets where
certain mixing properties are assumed. Pitskel~\cite{Pit} proved for
Axiom~A maps and equlibrium measures for H\"older continuous 
potentials that unions of cylinder sets have in the limit Poisson distributed
return times. An approximation argument allowed him then to also
deduce the same result for balls in the case of an Axiom~A map on
the two dimensional torus. He showed that the moments converge 
and then invoked a theorem of Sevast'yanov to conclude that 
the return times are Poissonian in the limit. Other subsequent results 
like by Denker~\cite{Den} which is along similar lines and in~\cite{HSV}
which considers parabolic maps on the interval extended those
results to more general settings. For rational maps this was done in~\cite{H00}
which allowed approximations for balls if the dimension of the measure was
not too large. More recently in~\cite{Ab3,AV1} 
results were obtain for $\phi$-mixing and $\alpha$-mixing systems
along a nested sequence of cylinder sets. Using the method of Chen and
Stein the Poisson distribution was established in~\cite{HP10} 
along sequences of unions of cylinders for $\phi$-mixing systems
over countable alphabets. This also allowed for approximations for 
balls under some favourable conditions. In~\cite{HY14} this was 
extended to $\alpha$-mixing systems and applied to the returns to 
Bowen balls. For a review see e.g.~\cite{H13}.

For returns to geometric balls apart from Pitskel's result~\cite{Pit} from 1990
all results are quite recent except for the one by Pitskel and on intervals
where approximations by cylinder sets can easily be used.
 For systems that can be modelled by Young 
towers, Chazottes and Collet~\cite{CC13} proved the limiting 
distribution to be Poissonian if the decay of correlations are  exponential
and the unstable manifold is one dimensional. This was generalised to 
polynomially decaying correlations and arbitrary dimensions in~\cite{HW14}.
In this case the speed of convergence is polynomial in the logarithm of the 
radius of the return ball.
A similar result without speed of convergence was obtained in~\cite{PS}
by using the Lebesgue density theorem.
In this paper we provide a limiting result for maps on manifolds whose 
correlations decay and which satisfy a certain uniform contraction 
property (Assumption~(V) below).

\section{Assumptions and main results} \label{assumptions}

Let $M$ be a manifold and $T:M\to M$ a map with the properties described below in
the assumptions. Let $\mu$ be a $T$-invariant probability measure on $M$.

For a ball $\ball(\mathsf{x})\subset M$ we define the counting function
\vspace{-0.2cm}
\begin{equation*}
\xi^{t}_{\rho,\mathsf{x}}(x)=\sum_{n=0}^{\floor{t/\mu(\ball(\mathsf{x}))}-1} {\ind_{\ball(\mathsf{x})}} \circ T^n(x).
\end{equation*}
which tracks the number of visits a trajectory of the point $x \in M$ makes to the ball $\ball(\mathsf{x})$
on an orbit segment of length $N=\floor{t/\mu(\ball(\mathsf{x}))}$, where $t$ is a positive
parameter.
(We often omit the sub- and superscripts and simply use $\xi(x)$.)

Let $\Gamma^u$ be a collection of unstable leaves $\gamma^u$
and  $\Gamma^s$ a collection of stable leaves $\gamma^s$. We assume
that $\gamma^u\cap\gamma^s$ consists of a single point for all $(\gamma^u,\gamma^s)\in\Gamma^u\times\Gamma^s$. 
The map $T$ contracts along the stable leaves and similarly $T^{-1}$ 
contracts along the unstable leaves.

For an unstable leaf $\gamma^u$ denote by $\mu_{\gamma^u}$ the 
disintegration of $\mu$ to the $\gamma^u$. We assume that $\mu$ 
has a product like decomposition  
$d\mu=d\mu_{\gamma^u}d\upsilon(\gamma^u)$,
where $\upsilon$ is a transversal measure. That is, if $f$ is a function on $M$ then
$$
\int f(x)\,d\mu(x)
=\int_{\Gamma^u} \int_{\gamma^u}f(x)\,d\mu_{\gamma^u}(x)\,d\upsilon(\gamma^u)
$$

If $\gamma^u, \hat\gamma^u\in\Gamma^u$ are two unstable leaves then the holonomy map $\Theta:\gamma^u\cap\Lambda\to \hat\gamma^u\cap\Lambda$ is defined by $\Theta(x)=\hat\gamma^u\cap\gamma^s(x)$ for $x\in\gamma^u\cap\Lambda$,  where $\gamma^u(x)$ be the local unstable leaf through $x$.

Let us denote by 
$J_n=\frac{dT^n\mu_{\gamma^u}}{d\mu_{\gamma^u}}$
the Jacobian of the map $T^n$ with respect to the measure $\mu$
in the unstable direction.

Let $\gamma^u$ be a local unstable leaf.
Assume there exists $R>0$ and  for every $n\in\mathbb{N}$ finitely many
 $y_k\in T^n\gamma^u$ so that 
$T^n\gamma^u\subset\bigcup_k B_{R,\gamma^u}(y_k)$, 
where $B_{R,\gamma^u}(y)$ is the embedded $R$-disk centered at $y$ in the unstable leaf
 $\gamma^u$.
Denote by $\zeta_{\varphi,k}=\varphi(B_{R,\gamma^u}(y_k))$ where $\varphi\in \mathscr{I}_n$
 and $\mathscr{I}_n$ denotes the  inverse branches of $T^n$. 
 We call $\zeta$ an $n$-cylinder.
Then there exists a constant $L$ so that the number of overlaps
$N_{\varphi,k}=|\{\zeta_{\varphi',k'}: \zeta_{\varphi,k}\cap\zeta_{\varphi',k'}\not=\varnothing,
\varphi'\in\mathscr{I}_n\}|$
is bounded by $L$ for all $\varphi\in \mathscr{I}_n$ and for all $k$ and $n$. 
This follows from the fact that
$N_{\varphi,k}$ equals $|\{k': B_{R,\gamma^u}(y_k)\cap B_{R,\gamma^u}(y_{k'})\not=\varnothing\}|$ 
which is uniformly bounded by some constant $L$.

We make the following assumptions:\\
(I) {\em Decay of correlations:} There exists a decay function $\lambda(k)$ so that 
$$
\left|\int_MG(H\circ T^k)\,d\mu-\mu(G)\mu(H)\right|
\le \lambda(k)\|G\|_{Lip}\|H\|_\infty\qquad\forall k\in\mathbb{N},
$$
for functions $H$ which are constant on 
local stable leaves $\gamma^s$ of $T$.\\
(II) {\em Dimension:} There exist $0<d_0<d_1$ such that $\rho^{d_0}\ge\mu(\ball)\ge\rho^{d_1}$.\\
(III) {\em Unstable dimension:} There exists a $u_0$ so that 
$\mu_{\gamma^u}(B_\rho(x))\le C_1\rho^{u_0}$ for all $\rho>0$ small enough
and for almost all $x\in \gamma^u$, every unstable leaf $\gamma^u$.\\
(IV)  {\em Distortion:} We  require that 
$\frac{J_n(x)}{J_n(y)}=\mathcal{O}(\omega(n))$ for all $x,y\in\zeta$ and $n$, where 
$\zeta$ are $n$-cylinders in unstable leaves $\gamma^u$ and $\omega(n)$ is a non-decreasing 
sequence.\\
(V) {\em Contraction:} There exists a function $\delta(n)\to0$ which decays at least summably polynomially, i.\,e.\, $\delta(n) = \mathcal{O}(n^{-\kappa})$ with $\kappa > 1$, so that 
$\diam\zeta\le \delta(n)$ for all $n$-cylinder $\zeta$ and all $n$.\\
(VI) {\em Annulus condition:} Assume that for some $\eta, \beta>0$:
$$
\frac{\mu(B_{\rho+r}\setminus B_{\rho-r})}{\mu(\ball)} 
= \mathcal{O}(r^\eta\rho^{-\beta})
$$
for every $r< \rho_0$ for some $\rho_0<\rho$ (see remark below).

\vspace{2mm}

\noindent For a positive parameter
$\mathfrak{a}$ define the set
\begin{equation} \label{defM_rho,J}
\mathcal{V}_\rho(\mathfrak{a}) = \{\mathsf{x} \in M: \ball(\mathsf{x}) \cap T^{n}\ball(\mathsf{x}) \ne \varnothing \text{ for some } 1 \leq n < \mathfrak{a}\abs{\log\rho}\},
\end{equation}
where $\rho>0$. The set $\mathcal{V}_\rho$ represents the points within $M$ with very short return times.

\subsection{Return times are Poisson distributed}
\begin{thrm}
Assume that the map $T: M\to M$ satisfies the assumptions (I)--(VI) where $\lambda(k)$ decays at 
least polynomially with power $p>\frac{\frac\beta\eta+d_1}{d_0}$. Moreover we assume that
$d_0(\kappa\eta-1)>\beta$ and $\kappa u_0>1$.
If $\delta(j)$ decays polynomially with power $\kappa$ and $\omega(j)\sim j^{\kappa'}$ 
for some $\kappa'\in[0,\kappa u_0-1)$.

Then 
$$
\mathbb{P}\!\left(\xi_{\rho,\mathsf{x}}=r\right)= e^{-t}\frac{t^r}{r!}+\mathcal{O}(\abs{\log\rho}^{- \sigma})
$$
for all $\mathsf{x}\not\in\mathcal{V}_\rho(\mathfrak{a})$ for some positive $\mathfrak{a}$,
where $\sigma=\kappa u_0-\kappa'-1$ is positive.
Moreover, there exists an $\mathfrak{a}>0$ so that 
$$
\mu(\mathcal{V}_\rho(\mathfrak{a}))=\mathcal{O}(\abs{\log\rho}^{- \sigma}).
$$

If $\delta(n)=\mathcal{O}(\vartheta^n), \vartheta<1$ is exponential then 
$$
\mathbb{P}\!\left(\xi_{\rho,\mathsf{x}}=r\right)= e^{-t}\frac{t^r}{r!}
+\mathcal{O}(t\rho^{u_0\mathfrak{a}\abs{\log\vartheta}}).
$$
\end{thrm}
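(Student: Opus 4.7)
My approach is the standard factorial-moments method. Write $A=\ball(\mathsf{x})$, $N=\floor{t/\mu(A)}$, and set
\begin{equation*}
Z_r=r!\,\ev\binom{\xi_{\rho,\mathsf{x}}^{t}}{r}=\sum_{0\le n_1<\cdots<n_r<N}\mu\!\left(\bigcap_{j=1}^{r} T^{-n_j}A\right).
\end{equation*}
The Poisson target value is $Z_r=t^r$, so the goal is $Z_r=t^r+\mathcal{O}(|\log\rho|^{-\sigma})$ uniformly for $r$ in a suitable range; a Bonferroni-type inversion then converts this into the claimed point-probability bound with the same error rate. I fix a gap scale $\Delta=\mathfrak{a}|\log\rho|$ matching the definition of $\mathcal{V}_\rho(\mathfrak{a})$ in~\eqref{defM_rho,J} and split $Z_r$ into a ``long'' part (all consecutive gaps $n_{j+1}-n_j\ge\Delta$) and a ``short'' part (some gap smaller than $\Delta$).

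\textbf{Long tuples via decay of correlations.} For a long tuple the intersection $\bigcap_j T^{-n_j}A$ decouples by iterated application of Assumption~(I). Since~(I) requires a Lipschitz factor, I replace $\ind_A$ by a smoothed approximant $\tilde\chi$ with Lipschitz constant $\le 1/w$ supported in $B_{\rho+w}(\mathsf{x})$; the $L^1$-error of each replacement is, by the annulus condition~(VI), at most $\mathcal{O}(w^\eta\rho^{-\beta}\mu(A))$. After $r-1$ decouplings the intersection reduces to $\mu(A)^r$ plus an accumulated error combining $\lambda(\Delta)/w$ (from correlations) and $rw^\eta\rho^{-\beta}\mu(A)$ (from smoothing). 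Using $\rho^{d_1}\le\mu(A)\le\rho^{d_0}$ from~(II) together with $\lambda(k)\le k^{-p}$, the hypothesis $p>(\beta/\eta+d_1)/d_0$ lets me choose $w$ so that the accumulated error decays polynomially in $|\log\rho|$ with exponent at least $\sigma$. Summing over long tuples recovers the main term $t^r$ with the correct error.

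\textbf{Short tuples and the measure of $\mathcal{V}_\rho$.} If some gap $m=n_{j+1}-n_j$ is below $\Delta$, the integrand is zero unless $A\cap T^{-m}A\ne\varnothing$, which by~\eqref{defM_rho,J} places $\mathsf{x}$ in $\mathcal{V}_\rho(\mathfrak{a})$; for $\mathsf{x}$ outside this set every short tuple contributes zero, so $Z_r$ reduces to the long part. To bound $\mu(\mathcal{V}_\rho(\mathfrak{a}))$, for each $1\le n<\mathfrak{a}|\log\rho|$ I would cover each unstable leaf by the bounded-multiplicity $n$-cylinders $\zeta$ described before Assumption~(I) (diameter $\le\delta(n)$, multiplicity $\le L$). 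On each $\zeta$ the condition $\ball(\mathsf{x})\cap T^n\ball(\mathsf{x})\ne\varnothing$ forces $T^n\mathsf{x}$ to lie within $\rho+\delta(n)$ of $\mathsf{x}$ in the unstable direction (the stable contraction being absorbed). Pulling the unstable-dimension bound~(III) on $T^n\zeta$ back through $T^n$ by means of the distortion~(IV) yields
\begin{equation*}
\mu(A_n)=\mathcal{O}\!\left(\omega(n)\,\delta(n)^{u_0}\right)=\mathcal{O}(n^{\kappa'-\kappa u_0}),
\end{equation*}
and since $\kappa u_0-\kappa'>1$ the controlled tail of $\sum_{n<\mathfrak{a}|\log\rho|}\mu(A_n)$ gives $\mu(\mathcal{V}_\rho(\mathfrak{a}))=\mathcal{O}(|\log\rho|^{-\sigma})$ with $\sigma=\kappa u_0-\kappa'-1$.

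\textbf{Main obstacle and exponential case.} The delicate step is the $\mathcal{V}_\rho$-measure estimate: with the weak polynomial contraction of~(V), one must carefully absorb the distortion growth $\omega(n)\sim n^{\kappa'}$ into the geometric factor $\delta(n)^{u_0}\sim n^{-\kappa u_0}$, and the inequality $\kappa u_0>\kappa'+1$ is precisely what renders the resulting sum summable with the stated tail rate. In the exponential-contraction case $\delta(n)\le\vartheta^n$ the per-$n$ estimate becomes $\omega(n)\vartheta^{u_0 n}$, whose sum over $n<\mathfrak{a}|\log\rho|$ is geometric and dominated by its top term $\vartheta^{u_0\mathfrak{a}|\log\rho|}=\rho^{u_0\mathfrak{a}|\log\vartheta|}$, yielding the stated error; the prefactor $t$ tracks the orbit-segment length $N=\floor{t/\mu(A)}$ through the decoupling step.
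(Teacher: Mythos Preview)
Your factorial-moment route is in principle a valid alternative to the paper's Chen--Stein style approximation (the paper invokes Theorem~\ref{helperTheorem} and controls two quantities $\mathcal{R}_1,\mathcal{R}_2$ rather than moments), but your execution has two genuine gaps.

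\emph{The gap scale $\Delta$ is far too small.} You take $\Delta=\mathfrak{a}|\log\rho|$ so that it matches the cutoff in $\mathcal{V}_\rho(\mathfrak{a})$. With only polynomial decay $\lambda(k)\sim k^{-p}$ this gives $\lambda(\Delta)\sim|\log\rho|^{-p}$, while the Lipschitz norm of your smoothed indicator is at least $1/\rho$ and the number of tuples contributing to $Z_r$ is of order $N^r\ge\rho^{-rd_0}$. The accumulated decoupling error is then at least $N\lambda(\Delta)/w\gtrsim\rho^{-d_0-1}|\log\rho|^{-p}$, which diverges; no choice of $w$ rescues this, and the hypothesis $p>(\beta/\eta+d_1)/d_0$ is not what is being used here. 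In the paper the decorrelation gap is taken of size $\Delta\sim\rho^{-v}$ with $v$ close to $d_0$, so that $\lambda(\Delta)\sim\rho^{vp}$ is a genuine power of $\rho$; that is exactly where the condition on $p$ enters. The price is that for $\mathsf{x}\notin\mathcal{V}_\rho(\mathfrak{a})$ the short-gap contribution does \emph{not} vanish: one must still control the ``medium'' gaps $J\le m<\Delta$, and it is the bound $\mu(\ball\cap T^{-m}\ball)\le c\,\omega(m)\delta(m)^{u_0}\mu(\ball)$ summed as a \emph{tail} from $J$ that produces the dominant error $J^{-\sigma}\sim|\log\rho|^{-\sigma}$. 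Your scheme collapses these two scales into one and thereby loses the mechanism that actually generates the stated rate.

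\emph{The $\mathcal{V}_\rho$ bound does not follow from your summation.} Your per-level estimate $\mu(A_n)=\mathcal{O}(\omega(n)\delta(n)^{u_0})=\mathcal{O}(n^{\kappa'-\kappa u_0})$ is $\rho$-independent, so $\sum_{n<\mathfrak{a}|\log\rho|}\mu(A_n)$ is a \emph{partial} sum of a convergent series and is $\mathcal{O}(1)$, not $\mathcal{O}(|\log\rho|^{-\sigma})$; for small $n$ (say $n=1$) one has $\delta(n)^{u_0}=\mathcal{O}(1)$ and the bound is useless. The paper handles this by a doubling trick (Lemma~\ref{B.3}): $\mathcal{N}_\rho(n)\subset\mathcal{N}_{s_p\rho}(2^pn)$, which lifts every small $n$ into the range $[\mathfrak{b}J,2\mathfrak{b}J]$ at the cost of an enlarged radius, after which the cylinder/distortion argument applies uniformly and the sum is a genuine tail from $\mathfrak{b}J$. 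Without this reduction the small-$n$ levels are not controlled.
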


The proof of the first part of the theorem is given in the next section. The bound on
the size of the very short return set is given in Section~\ref{veryshort}.

\vspace{2mm}

\noindent  {\em Remark~1:} The standard case of bounded distortion corresponds
to the value $\kappa'=0$. Then the rate of convergence is $\sigma=\kappa u_0-1$.

\vspace{2mm}

\noindent  {\em Remark~2:} If $\mu$  has dimension $d$ then the condition on the decay of 
correlation is $p>1+\frac\beta{d\eta}$.

\vspace{2mm}

\noindent  {\em Remark~3:} For an absolutely continuous measure $\beta$ and $\beta$ 
can be chosen arbitrarily close to $1$, $d_0$ and $d_1$ are arbitrarily close to $D=\dim M$; 
thus the requirement for $p$ is to be larger than $\frac1D+1$ and $\kappa>\frac1{D}+1$
as $u_0$ is arbitrarily close to $D$.

\vspace{2mm}

\noindent  {\em Remark~4:}  In the annulus condition~(IV) we require that $r<\rho_0$ where
according to Section~\ref{total_error} $\rho_0=\mathcal{O}(\rho^{v\kappa})$ and $v<d_0$
can be arbitrarily close to $d_0$.

\subsection{Extremal Values Distribution}

 We take a point $z \in M$ and define
$$
\varphi(x) = g(\mu(B_{d(x,z)}(z)))
$$
where $g$ is a function from $M$ to $\mathbb{R}\cup\{+\infty\}$ with the following properties: $g$ is strictly decreasing in a neighborhood of $0$; $0$ is a global maximum for $g$; $g$ satisfies one of the following three properties:

\noindent Type 1. There exist some strictly positive function $p$ such that
$$
g^{-1}\left(g(\frac{1}{n})+yp(g(\frac{1}{n}))\right)=(1+\varepsilon_n)\frac{e^{-y}}{n}.
$$
with $\epsilon_n \to 0$ as $n \to \infty$ for all $y \in \mathbb{R}$.

\noindent Type 2. $g(0) = +\infty$ and there exist $\beta>0$ such that 
$$
g^{-1}\left(g(\frac{1}{n})y\right) = (1+\varepsilon_n)\frac{y^{-\beta}}{n}
$$
for all $y>0$.

\noindent Type 3. $g(0) = D<+\infty$ and there exist $\gamma>0$ such that 
$$
g^{-1}\left(D-g(\frac{1}{n})y\right) =  (1+\varepsilon_n) g^{-1}\left(D-g(\frac{1}{n})\right)\frac{y^{\gamma}}{n}
$$
for all $y>0$.\\
Examples of functions satisfying the three types are $g_1(x) = -\log x$, $g_2(x) = x^{-\frac{1}{\beta}}$ and $g_3(x) = D-x^{\frac{1}{\gamma}}$.

We put $X_n = \varphi \circ T^n$ and $M_n = \max\{X_k: 0 \le k \le n-1\}$. We also write
$$
M_{j,n} = \max\{X_j, \cdots, X_{j+n-1}\}.
$$
Let $\{\hat{X}_n\}$ be the a stationary, independent process such that $\hat{X}_0$ has the same distribution as $X_0$. Denote by $\hat{M}_n$ the corresponding maxima of $\{\hat{X}_n\}$. From the extreme value theory of stationary, independent processes we know that under proper linear normalization, $a_n(\hat{M}_n - b_n)$ converges to one of the following three limits:

\noindent Type 1. 
$$
G(x) = e^{-e^{-x}}, \hspace{2cm} -\infty < x < \infty.
$$

\noindent Type 2. 
$$
G(x) = \begin{cases}
0 \hspace{2cm} \text{if } x \le 0\\
e^{-x^{-\beta}} \hspace{1cm}\text{ if } x >0, \text{ for some } \beta>0.
\end{cases}
$$

\noindent Type 3. 
$$
G(x) = 
\begin{cases}
e^{-(-x)^{\gamma}} \hspace{1cm} \text{if }x\le 0 \text{ for some} \gamma >0\\
1 \hspace{2cm} \text{if }x>0.
\end{cases}
$$

Now we state the Theorem.

\begin{thrm}\label{EVL}
Assume that the map $T: M\to M$ satisfies the assumptions (I)--(VI) where $\lambda(k)$ decays at 
least polynomially with power $p>\frac{\beta}{\eta d_1}+1 $. Moreover we assume that
$\kappa\eta-1>\beta$ and $\kappa u_0>2$ if $\delta(j)$ decays polynomially with power 
$\kappa$ and $\omega(j)\sim j^{\kappa'}$ 
for some $\kappa'\in[0,\kappa u_0-3)$. Then we have Type $i$ extreme value law for observables $g_i$ with type $i$, $i=1,2,3.$
\end{thrm}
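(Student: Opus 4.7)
The plan is to deduce the extreme value law from the Poisson return-time statistics of Theorem~1 via the standard bridge identity. Because $g$ is strictly decreasing near $0$ and $\varphi(x) = g(\mu(B_{d(x,z)}(z)))$ is a radial function of $d(x,z)$, the super-level set $\{\varphi > u\}$ coincides, for $u$ close to the essential supremum of $\varphi$, with a ball $B_{\rho(u)}(z)$ where $\rho(u)$ is determined by $\mu(B_{\rho(u)}(z)) = g^{-1}(u)$. Hence, with $t_n = n\,\mu(B_{\rho(u_n)}(z))$, one has the basic identification
\[
\{M_n \le u_n\} \;=\; \bigcap_{k=0}^{n-1}\{T^k x \notin B_{\rho(u_n)}(z)\} \;=\; \{\xi^{t_n}_{\rho(u_n), z} = 0\},
\]
up to a rounding error from $\lfloor t_n/\mu(B_{\rho(u_n)}(z))\rfloor = n$ that vanishes as $n\to\infty$.

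Next, for each type $i=1,2,3$ I pick the canonical normalisation $u_n(y) = y/a_n + b_n$ so that the corresponding hypothesis on $g^{-1}$ yields $t_n \to \tau_i(y)$, with $\tau_1(y)=e^{-y}$, $\tau_2(y)=y^{-\beta}$, $\tau_3(y)=(-y)^\gamma$. Concretely: for Type~1 take $b_n = g(1/n)$ and $1/a_n = p(g(1/n))$; for Type~2 take $b_n = 0$ and $1/a_n = g(1/n)$; for Type~3 take $b_n = D$ and $1/a_n = g(1/n)\cdot g^{-1}(D-g(1/n))^{-1/\gamma}$, the last factor being required to absorb the prefactor in the Type~3 formula so that $n\,g^{-1}(D - g(1/n)\,|y|) \to |y|^\gamma$. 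In all three cases the defining property of $g$ gives $t_n = (1+\varepsilon_n)\tau_i(y) \to \tau_i(y)$, and the radius $\rho(u_n)$ shrinks polynomially in $n$ by the dimension assumption~(II).

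Once this setup is in place, I apply Theorem~1 with $r=0$, $t=t_n$, $\rho=\rho(u_n)$, and centre $\mathsf{x}=z$, provided $z \notin \mathcal{V}_{\rho(u_n)}(\mathfrak{a})$. The strengthened parameter conditions in Theorem~2 (in particular $p > \beta/(\eta d_1)+1$ and $\kappa u_0 > 2$ with $\kappa' < \kappa u_0 - 3$) are designed so that the hypotheses of Theorem~1 are satisfied and so that the remainder $\mathcal{O}(|\log\rho(u_n)|^{-\sigma})$ becomes small enough (with $\sigma>2$) to be summable along the discrete sequence $n\mapsto \rho(u_n)$. Theorem~1 then gives
\[
\mathbb{P}(a_n(M_n - b_n) \le y) \;=\; \mathbb{P}(\xi^{t_n}_{\rho(u_n), z} = 0) \;=\; e^{-t_n} + \mathcal{O}(|\log\rho(u_n)|^{-\sigma}) \;\longrightarrow\; e^{-\tau_i(y)} = G_i(y),
\]
which is the Type~$i$ extreme value law.

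The main technical obstacle, besides the Type~3 bookkeeping described above, is handling the ``generic centre'' condition $z \notin \mathcal{V}_{\rho(u_n)}(\mathfrak{a})$, which is required to invoke Theorem~1 at a single point $z$. Since Section~\ref{veryshort} gives $\mu(\mathcal{V}_\rho(\mathfrak{a})) = \mathcal{O}(|\log\rho|^{-\sigma})$ with $\sigma>2$ under the present parameter assumptions, a Borel--Cantelli argument applied along the geometrically decreasing sequence $\rho_n = \rho(u_n)$ shows that for $\mu$-almost every $z$ the inclusion $z \notin \mathcal{V}_{\rho_n}(\mathfrak{a})$ holds for all but finitely many $n$. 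The convergence of distribution functions is monotone in $y$, so an upgrade from pointwise to uniform convergence on compacta (and hence the EVL in the usual weak-convergence sense) follows by a standard Polya-type argument once $G_i$ is continuous.
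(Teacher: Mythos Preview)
Your route is genuinely different from the paper's. The paper does \emph{not} invoke Theorem~1; instead it verifies the Freitas--Freitas conditions $D_2(u_n)$ and $D'(u_n)$ directly. Condition $D_2$ is obtained by the same Lipschitz/stable-leaf approximation that produced the $\mathcal{R}_1$ estimate, while $D'$ is handled by splitting the short-return sum at $J=\lfloor\mathfrak{a}|\log r_n|\rfloor$, bounding the tail via the $\mathcal{R}_2$ estimate, and controlling the very short returns $1\le j<J$ by Collet's maximal-function trick together with a Borel--Cantelli argument along the sparse time scale $n\in[\exp(k^\xi),\exp((k+1)^\xi))$. Your idea of reading the EVL off the $r=0$ case of the Poisson law is conceptually cleaner, but the paper's $D_2/D'$ route avoids having to pin down a single centre $z$ outside every $\mathcal{V}_{\rho(u_n)}$.

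That last point is precisely where your argument breaks. You assert that $\rho_n=\rho(u_n)$ is ``geometrically decreasing'' and that $\sigma>2$ makes $\sum_n\mu(\mathcal{V}_{\rho_n})$ finite. But by Assumption~(II) and $\mu(B_{\rho_n}(z))=g^{-1}(u_n)\sim c/n$, the radius satisfies $\rho_n\asymp n^{-1/d}$ for some $d\in[d_0,d_1]$, i.e.\ it decays only \emph{polynomially}. Hence $|\log\rho_n|\asymp\log n$ and
\[
\sum_{n\ge 2}\mu(\mathcal{V}_{\rho_n})\;\asymp\;\sum_{n\ge 2}(\log n)^{-\sigma}\;=\;\infty
\]
for every finite $\sigma$. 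So Borel--Cantelli over all $n$ cannot yield ``$z\notin\mathcal{V}_{\rho_n}$ for all but finitely many $n$'', and without that you cannot apply Theorem~1 at the fixed centre $z$ for the relevant radii. A repair is possible, but it requires exactly the kind of sparse-subsequence device the paper uses: pass to a scale $k$ with $n\asymp\exp(k^\xi)$ so that the Borel--Cantelli sum becomes $\sum_k k^{-(\xi\sigma-\theta)}$ (this is where $\sigma>2$ is actually used), and then interpolate between consecutive $k$ using monotonicity of balls. Note also that $\mathcal{V}_\rho$ is not monotone in $\rho$ in any obvious way (shrinking $\rho$ shrinks the balls but enlarges the time window $J=\mathfrak{a}|\log\rho|$), so the interpolation step is not free; the paper sidesteps this by working with $\mu(B_{r_n}\cap\mathcal{V}_{r_n})$ through the Hardy--Littlewood maximal function rather than with membership $z\in\mathcal{V}_{r_n}$.
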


\section{Proof of Theorem 1}

\subsection{Poisson approximation of the return times distribution} \label{set_up_T1}
To prove Theorem~1 we will employ the Poisson approximation theorem from Section~\ref{poisson}.
Let $\mathsf{x}$ be a point in the phase space and $\ball := \ball(\mathsf{x})$ for $\rho>0$.
 Let $X_n=\ind_{\ball} \circ T^{n-1}$, then we put $N = \floor{t/\mu(\ball)}$,
 where $t$ is a positive parameter. We write $S_a^b=\sum_{n=a}^bX_n$ (and $S=S_1^N$).
Then for any $2 \leq \Delta \leq N$ ($C_3$ from Section~\ref{poisson})
\begin{equation} \label{errorUSE}
\Abs{\mathbb{P}(S=k) - \frac{t^k}{k!} \, e^{-t}} \; 
\leq \; C_3 ( N(\mathcal{R}_1+\mathcal{R}_2) + \Delta \, \mu(\ball)),
\end{equation}
where
\begin{align*}
\mathcal{R}_1 
&= \sup_{\substack {0 <j<N-\Delta \\ 0<q<N-\Delta-j}} \left|\ev(\ind_{\ball} \ind_{S_{\Delta+1}^{N-j}=q})
- \mu(\ball) \, \ev(\ind_{S_{\Delta+1}^{N-j}=q})\right| \\
\mathcal{R}_2 &= \sum_{n=1}^{\Delta-1} \ev(\ind_{\ball} \; \ind_{\ball} \circ T^n).
\end{align*}
Since we restrict to the complement of the set $\mathcal{V}_\rho$ (cf.~\eqref{defM_rho,J})
we have from now on
$$
\mathcal{R}_2 = \sum_{n=J}^{\Delta-1} \mu(\ball \cap T^{-n} \ball),
$$
where  $J = \floor{\mathfrak{a} \, \abs{\log \rho}}$.
Since $\mathbb{P}(S=k)=0$ for $k>N$ we obtain 
\begin{equation} \label{yay!_k>N}
\Abs{\mathbb{P}(S=k) - \frac{t^k}{k!} \, e^{-t}} = \frac{t^k}{k!} \, e^{-t} 
\leq c_1\abs{\log \rho}^{-\kappa u_0} \qquad \forall k>N
\end{equation}
using the fact that $\mu(B_\rho)\lesssim\rho^{d_0}$ and for $\rho$ sufficiently small.

\vspace{0.5cm}
\noindent We now proceed to estimate the error between the distribution of $S$ and a Poissonian for
$k \leq N$ based on Theorem~\ref{helperTheorem}.

\subsection{Estimating $\mathcal{R}_1$} \label{est_R1_section}

By invariance of the measure $\mu$ we can also write
$$
\mathcal{R}_1 = \sup_{\substack {0 <j<N-\Delta \\ 0<q<N-\Delta-j}}\left| \mu(\ball \cap T^{-\Delta}\{S_{1}^{N-j-\Delta}=q\}) - \mu(\ball) \, \mu(\{S_{1}^{N-j-\Delta}=q\}) \right|.
$$
We now use the decay of correlations~(I) to obtain an estimate for
$\mathcal{R}_1$. Approximate $\ind_{\ball}$ by Lipschitz functions
from above and below as follows:
\begin{equation*}
\phi(x) =
\begin{cases}
1 & \text{on $\ball$} \\
0 & \text{outside $B_{\rho + \delta \rho}$}
\end{cases}
\hspace{0.7cm} \text{and} \hspace{0.7cm}
\tilde{\phi}(x) =
\begin{cases}
1 & \text{on $B_{\rho - \delta \rho}$} \\
0 & \text{outside $\ball$}
\end{cases}
\end{equation*}
with both functions linear within the annuli. The Lipschitz norms of both $\phi$ and $\tilde{\phi}$ are equal to $1/\delta\rho$ and $\tilde{\phi} \leq \ind_{\ball} \leq \phi$.

We obtain
\begin{align*}
\mu(\ball \cap \{S_{\Delta}^{N-j}=q\}) - \mu(\ball) \, \mu(\{S_{1}^{N-j-\Delta}=q\})\hspace{-3cm} \\
& \leq \int_M \phi \; \cdot \ind_{S_\Delta^{N-j}=q} \, d\mu - \int_M \ind_{\ball} \, d\mu \, \int_M \ind_{S_1^{N-j-\Delta}=q} \, d\mu \\[0.2cm]
& =X+Y
\end{align*}
where
\begin{align*}
X&=\left(\int_M \phi \, d\mu - \int_M \ind_{\ball} \, d\mu \right) \int_M \ind_{S_1^{N-j-\Delta}=q} \, d\mu\\
Y&=\int_M \phi \; (\ind_{S_\Delta^{N-j}=q} ) \, d\mu - \int_M \phi \, d\mu \, \int_M \ind_{S_1^{N-j-\Delta}=q} \, d\mu .
\end{align*}
The two terms $X$ and $Y$ are estimated separately.
The first term is estimated by:
$$
X \leq\int_M \ind_{S_1^{N-j-\Delta}=q} \, d\mu \, \int_M (\phi - \ind_{\ball}) \, d\mu
 \leq \mu(B_{\rho + \delta \rho} \setminus \ball).
$$
In order to estimate the second term $Y$ we use the decay of correlations and
have to approximate $\ind_{S_{1}^{N-j-\Delta}=q}$ by a function which is constant on local stable leaves.
For that purpose put
$$
\mathcal{S}_n
=\bigcup_{\substack{\gamma^s\\T^n\gamma^s\subset B_\rho}}T^n\gamma^s,
\hspace{6mm}
\partial\mathcal{S}_n
=\bigcup_{\substack{\gamma^s\\T^n\gamma^s\cap B_\rho\not=\varnothing}}T^n\gamma^s
$$
and
$$
\mathscr{S}_\Delta^{N-j}=\bigcup_{n=\Delta}^{N-j}\mathcal{S}_n,
\hspace{6mm}
\partial\mathscr{S}_\Delta^{n-j}=\bigcup_{n=\Delta}^{N-j}\partial\mathcal{S}_n.
$$
The set
$$
\mathscr{S}_\Delta^{N-j}(q)=\{S_{1}^{N-j-\Delta}=q\}\cap\mathscr{S}_\Delta^{N-j}
$$
is then a union of local stable leaves. This follows from the fact that by construction
$T^n y\in B_\rho$ if and only if $T^n\gamma^s(y)\subset B_\rho$.
We also have
$\{S_p^{N-j}=q\}\subset\tilde{\mathscr{S}}_\Delta^{N-j}(q)$
where the set $\tilde{\mathscr{S}}_\Delta^{N-j}(k)=\mathscr{S}_\Delta^{N-j}(k)\cup\partial\mathscr{S}_\Delta^{N-j}$
is a union of local stable leaves.

Denote by  $\psi_\Delta^{N-j}$ the characteristic function of $\mathscr{S}_\Delta^{N-j}(k)$
and by $\tilde\psi_\Delta^{N-j}$ the characteristic function of
$\tilde{\mathscr{S}}_\Delta^{N-j}(k)$. Then $\psi_\Delta^{N-j}$ and $\tilde\psi_\Delta^{N-j}$
are constant on local stable leaves and satisfy
$$
\psi_\Delta^{N-j}\le\ind_{S_{1}^{N-j-\Delta}=q}\le\tilde\psi_\Delta^{N-j}.
$$
Since $\{y:\psi_\Delta^{N-j}(y)\not=\tilde\psi_\Delta^{N-j}(y)\}\subset\partial\mathscr{S}_\Delta^{N-j}$
we need to estimate the measure of $\partial\mathscr{S}_\Delta^{N-j}$.

 By the contraction property
  $\mbox{diam}(T^n\gamma^s(y))\le\delta(n)$ and consequently
  $$
  \bigcup_{\substack{\gamma^s\\T^n\gamma^s\subset B_\rho}}T^n\gamma^s
  \subset B_{\rho+\delta(n)}\setminus B_{\rho-\delta(n)}
  $$
and therefore
$$
\mu(\partial\mathscr{S}_\Delta^{N-j})
\le\mu\left(\bigcup_{n=\Delta}^{N-j}T^{-n}\left(B_{\rho+\delta(n)}\setminus B_{\rho-\delta(n)}\right)\right)
\le\sum_{n=\Delta}^{N-j}\mu(B_{\rho+\delta(n)}\setminus B_{\rho-\delta(n)}).
$$
Hence, by assumption~(VI),
$$
\mu(\partial\mathscr{S}_\Delta^{N-j}) 
=\mathcal{O}(1) \sum_{n=\Delta}^{\infty}\frac{n^{-\kappa\eta}}{\rho^\beta}\mu(\ball)\\
=\mathcal{O}(\rho^{v(\kappa\eta-1) -\beta}) \mu(\ball)
$$
provided $\delta(n) = \mathcal{O}(n^{-\kappa})$ and $\Delta\sim\rho^{-v}$ for some positive $v > \frac{\beta}{\kappa\eta-1}$ which is determined in Section~\ref{total_error}  below. 
If we split $\Delta=\Delta'+\Delta''$
then we can estimate as follows:
\begin{align*}
Y&=\left|  \int_M \phi \; T^{-\Delta'}(\ind_{S_{\Delta''}^{N-j-\Delta'}=q}  ) \, d\mu
- \int_M \phi \, d\mu \, \int_M \ind_{S_1^{N-j-\Delta}=q} \, d\mu\right|\\
&
\le \lambda(\Delta')\|\phi\|_{Lip}\|\ind_{\tilde{\mathscr{S}}_{\Delta''}^{N-j-\Delta'}}\|_{\mathscr{L}^\infty}
+2\mu(\partial\mathscr{S}_{\Delta''}^{N-j}).
  \end{align*}
Hence
\begin{eqnarray*}
\mu(\ball \cap T^{-\Delta}\{S_{1}^{N-j-\Delta}=q\}) - \mu(\ball) \, \mu(\{S_{1}^{N-j-\Delta}=q\})
\hspace{-4cm}&&\\
&\leq &\frac{\lambda(\Delta/2)}{\delta \rho} + \mu(B_{\rho + \delta \rho\setminus \ball })
+\mathcal{O}(\rho^{v(\kappa\eta-1)-\beta})\mu(\ball)
\end{eqnarray*}
by taking $\Delta' = \Delta'' = \frac{\Delta}{2}$.
A similar estimate from below can be done using $\tilde\phi$. Hence 
\begin{equation} \label{R1est}
\mathcal{R}_1
\leq c_2\left(\frac{\lambda(\Delta/2)}{\delta\rho}
+\mu(B_{\rho +\delta \rho} \setminus B_{\rho -\delta \rho})\right)
+\mathcal{O}(\rho^{v(\kappa\eta-1)-\beta})\mu(\ball).
\end{equation}

In the exponential case when $\delta(n)=\mathcal{O}(\vartheta^n)$  we choose
$\Delta=s\abs{\log\rho}$ for some $s>0$ and obtain the estimate 
$$
\mathcal{R}_1
\leq c_2\left(\frac{\lambda(\Delta/2)}{\delta\rho}
+\mu(B_{\rho +\delta \rho} \setminus B_{\rho -\delta \rho})\right)
+\mathcal{O}(\rho^{s\abs{\log\vartheta}-\beta})\mu(\ball).
$$

\subsection{Estimating the  terms  $\mathcal{R}_2$}

We will estimate the measure of each of the summands comprising $\mathcal{R}_2$ individually.
We use the product form of the measures
 $\mu$.
For that purpose fix $j$ and and let $\gamma^u$ be an unstable local leaf through $B$.
Then we put 
$$
\mathscr{C}_j(B,\gamma^u)=\{\zeta_{\varphi,j}: \zeta_{\varphi,j}\cap B\not=\varnothing,\varphi\in \mathscr{I}_j\}
$$
 for the cluster of $j$-cylinders that covers the set $B$,
 where the sets $\zeta_{\varphi,k}$ are the pre-images of embedded $R$-balls in 
 $T^j\gamma^u$.
Then 
\begin{eqnarray*}
\mu_{\gamma^u}(T^{-j}\ball\cap \ball)
&\le&\sum_{\zeta\in\mathscr{C}_j(\ball,\gamma^u)}\frac{\mu_{\gamma^u}(T^{-j}\ball\cap \zeta)}{\mu_{\gamma^u}(\zeta)}\mu_{\gamma^u}(\zeta)\\
&\le&\sum_{\zeta\in \mathscr{C}_j(\ball,\gamma^u)}c_3\omega(j)
\frac{\mu_{T^j\gamma^u}(\ball\cap T^j\zeta)}
{\mu_{T^j\gamma^u}(T^j\zeta)}\mu_{\gamma^u}(\zeta)
\end{eqnarray*}
Since $\mu_{T^j\gamma^u}(T^j\zeta)=\mu_{T^j\gamma^u}(B_{R,\gamma^u}(y_k))$
 (for some $y_k$) is uniformly bounded from below, we obtain  
\begin{eqnarray*}
\mu_{\gamma^u}(T^{-j}\ball\cap\ball)
&\le& c_3\omega(j)\mu_{T^j\gamma^u}(\ball)
\sum_{\zeta\in \mathscr{C}_j(\ball,\gamma^u)}\mu_{\gamma^u}(\zeta)\\
&\le& c_3\omega(j)\mu_{T^j\gamma^u}(\ball)\,  L\,
\mu_{\gamma^u}\!\left(\bigcup_{\zeta\in \mathscr{C}_j(\ball,\gamma^u)}\zeta\right)
\end{eqnarray*}
Now, since $\diam\bigcup_{\zeta\in \mathscr{C}_j(\ball,\gamma^u)}\zeta
\le \delta(j)+\diam \ball\le c_1\delta(j)$ 
(as we can assume that $\rho<\delta(j)$) we obtain
$$
\mu_{\gamma^u}(T^{-j}\ball\cap \ball)\le c_4\omega(j)\mu_{T^j\gamma^u}(\ball)\delta(j)^{u_0}.
$$
Since   $d\mu=d\mu_{\gamma^u}d\upsilon(\gamma^u)$
we obtain
$$
\mu(T^{-j}\ball\cap \ball)\le c_5\omega(j)\mu(\ball)\delta(j)^{u_0}.
$$
Summing up the $\mu(T^{-j}\ball\cap \ball)$  over $j=J,\dots,\Delta-1$, we see that outside the set of 
forbidden ball centers $\mathcal{V}_\rho $ we get
\begin{equation}\label{R_2}
\mathcal{R}_2 = \sum_{j=J}^{\Delta-1} \mu(T^{-j}\ball\cap \ball)
 \leq c_5\sum_{j=J}^{\Delta-1} \omega(j)\delta(j)^{u_0}\mu(\ball)
\end{equation}

for some $c_5$. If we assume that $\omega(j)\sim j^{\kappa'}$ and
$\delta(j)$ decays polynomially with power $\kappa$ then
$$
\mathcal{R}_2 
 \leq c_6\mu(\ball)\sum_{j=J}^{\Delta-1} j^{\kappa'}j^{-\kappa u_0}
  \leq c_7J^{-\sigma}\mu(\ball)
$$
for   $\rho$ small enough, where $\sigma=\kappa u_0-\kappa'-1$ is positive by assumption.
If $\delta(j)$ decays super polynomially then 
$$
\mathcal{R}_2   \leq c_7J^{\kappa'}\delta(J)^{u_0}\mu(\ball).
$$

In the exponential case ($\delta(n)=\mathcal{O}(\vartheta^n)$)  we obtain 
$$
\mathcal{R}_2 = \mathcal{O}(\rho^{u_0\mathfrak{a}\abs{\log\vartheta}})\mu(\ball)
  $$
  as $J=\mathfrak{a}\abs{\log\rho}$.

\subsection{The total error}\label{total_error} For the total error  we now put $\delta\rho=\rho^w$
and $\Delta=\rho^{-v}$ where $v<d_0$ since $\Delta<\!\!<N$ and $N\ge \rho^{-d_0}$. Then $\lambda(\Delta)=\mathcal{O}(\Delta^{-p})=\mathcal{O}(\rho^{pv})$
and thus (in the polynomial case)
\begin{eqnarray*}
\left|\mathbb{P}(S=k)-\frac{t^k}{k!}e^{-t}\right|
&\le& Nc_1\left(\frac{\lambda(\Delta)}{\delta\rho}
+\mu(B_{\rho +\delta \rho} \setminus B_{\rho -\delta \rho})\right)+c_7J^{- \sigma}
+\mathcal{O}(\rho^{v(\kappa\eta-1)-\beta})\\
&\le&c_8\left(\rho^{vp-w-d_1}+\rho^{w\eta-\beta}+tJ^{- \sigma}+\rho^{v(\kappa\eta-1)-\beta}\right).
\end{eqnarray*}
We can choose $v<d_0$ arbitrarily close to $d_0$ and then require
 $d_0p-w-d_1>0$,  $w\eta-\beta>0$ and $d_0(\kappa\eta-1)-\beta>0$. We can choose $w>\frac\beta\eta$ arbitrarily
 close to $\frac\beta\eta$ and can
 satisfy all requirements if $p>\frac{\frac\beta\eta+d_1}{d_0}$ in the case when $\lambda$ decays polynomially with power $p$, i.e.\ $\lambda(k)\sim k^{-p}$.
 Hence
 $$
 \left|\mathbb{P}(S=k)-\frac{t^k}{k!}e^{-t}\right|\le C_1J^{-\sigma}
$$
for some $C_1$.

In the exponential case ($\delta(n)=\mathcal{O}(\vartheta^n)$)  we obtain 
$$
\mathbb{P}(S=k)-\frac{t^k}{k!}e^{-t}
=\mathcal{O}(1)
\left(\rho^{s\abs{\log\vartheta}-w-d_1}+\rho^{w\eta-\beta}+t\rho^{u_0\mathfrak{a}\abs{\log\vartheta}}
+\rho^{s\abs{\log\vartheta}-\beta}\right).
$$
Choosing $s$ and $w$ large enough, we obtain that the RHS is of order 
$\mathcal{O}(t\rho^{u_0\mathfrak{a}\abs{\log\vartheta}})$.
\qed

\section{Very Short Returns}\label{VeryShortReturns}

\subsection{Assumptions} \label{ass_T2}

Let $(M, T)$ be a dynamical system equipped with a metric $d$. Assume that the map $T: M \maps M$ is a $C^2$-diffeomorphism. As at the start of the paper the set $\mathcal{V}_\rho \subset M$ is given by
\begin{equation*}
\mathcal{V}_\rho = \{\mathsf{x} \in M: \ball(\mathsf{x}) \cap T^{n}\ball(\mathsf{x}) \ne \varnothing \text{ for some } 1 \leq n < J\},
\end{equation*}
where $J = \floor{\mathfrak{a} \, \abs{\log \rho}}$ and $\mathfrak{a} = (4 \log A)^{-1}$ with
$$
A =\sup_\omega 
\left(\norm{DT}_{\mathscr{L}^\infty} + \norm{DT^{-1}}_{\mathscr{L}^\infty}\right)
$$
($A\ge2$).
We will need the following assumptions:

\vspace{2mm}

\noindent(V1)  {\em Distortion:}
 We  require that 
$\frac{J_n(x)}{J_n(y)}=\mathcal{O}(\omega(n))$ for all $x,y\in\zeta$ and $n$, where 
$\zeta$ are $n$-cylinders on unstable leaves $\gamma^u$ and
$\omega(n)$ is a (non-decreasing) sequence.

\vspace{2mm}

\noindent(V2) {\em Contraction:}
 There exists a function $\delta(n)\to0$ so that 
$\diam\zeta\le \delta(n)$ for all $n$-cylinder $\zeta$ and all $n$ and $\omega$.

\vspace{2mm}

\noindent (V3) {\em Geometric regularity of the measure on the unstable leaves:}
Assume there exists $u_0>0$ such that
$$
\mu_{\gamma^u}(\ball(\mathsf{x})) \leq \rho^{u_0}
$$
for all $\mathsf{x}$, unstable leaves $\gamma^u$ and $\rho$ small enough.

\subsection{Estimate on the measure of $\mathcal{V}_\rho$} \label{veryshort}

Now we can show that the set of centres where small balls have very short returns is small.
To be precise we have the following result:

\begin{prop}\label{prop.short.returns}
Assume that the map $T:M\to M$ satisfies the assumptions (V1)--(V3).
Then there exist constants $C_{2}>0$ such that for all $\rho$ small enough
$$
\mu(\mathcal{V}_\rho)\le \frac{C_2}{\abs{\log\rho}^{ \sigma}}
$$
where 
 $\sigma=\kappa u_0-\kappa'-1$ if $\delta$ decays polynomially with power $\kappa>1$
 and $\omega$ grows polynomially with power $\kappa'\ge0$ assuming $\sigma>0$.
 
 If $\delta$ decays exponentially and $\limsup_{n\to\infty}\frac{\log\log\omega(n)}{\log n}<\frac12$
 then the error term on the RHS is $\mathcal{O}(\delta(\abs{\log \rho})^{u_0})$.
 
\end{prop}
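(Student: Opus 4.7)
The plan is to reduce $\mathcal{V}_\rho$ to a union of ``near-periodic'' sets $V_n = \{x : d(T^n x, x) \le 2\rho^{3/4}\}$ for $1 \le n < J$, bound each $\mu(V_n)$ via the unstable disintegration and the $n$-cylinder cover, and then sum. For the reduction: if $\mathsf{x} \in \mathcal{V}_\rho$, pick $y, z \in \ball(\mathsf{x})$ with $T^n y = z$ for some $1 \le n < J$; then $d(T^n \mathsf{x}, \mathsf{x}) \le \|DT^n\|_\infty \rho + \rho \le (A^n + 1)\rho$. The choice $\mathfrak{a} = (4 \log A)^{-1}$ yields $A^n \le A^J \le \rho^{-1/4}$, so $d(T^n \mathsf{x}, \mathsf{x}) \le 2\rho^{3/4}$ for $\rho$ small enough, placing $\mathsf{x} \in V_n$; hence $\mathcal{V}_\rho \subset \bigcup_{n=1}^{J-1} V_n$.

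Next I would bound $\mu(V_n)$. Fix an unstable leaf $\gamma^u$ and cover it by the $n$-cylinders $\zeta$ of Section~\ref{assumptions} (each of diameter $\le \delta(n)$ by (V2), with bounded overlap $L$). For $x \in \zeta \cap V_n$ and any $x_0 \in \zeta$, the image $T^n x$ lies in $B_{r+\delta(n)}(x_0)$ with $r = 2\rho^{3/4}$, so by (V3)
\[
\mu_{T^n\gamma^u}(B_{r+\delta(n)}(x_0)) \le (r+\delta(n))^{u_0}.
\]
Transferring this back to $\gamma^u$ via the Jacobian and the distortion bound (V1) (and using that $T^n\zeta$ is an embedded $R$-disk whose $\mu_{T^n\gamma^u}$-measure is uniformly bounded below) gives $\mu_{\gamma^u}(V_n \cap \zeta) \le C \omega(n)(r+\delta(n))^{u_0}\,\mu_{\gamma^u}(\zeta)$. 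Summing over $\zeta$ with overlap $L$ and integrating over the transversal measure $\upsilon$ yields $\mu(V_n) \le C \omega(n)(r+\delta(n))^{u_0}$. In the polynomial regime one has $\delta(n) \gtrsim J^{-\kappa} \sim |\log\rho|^{-\kappa} \gg \rho^{3/4} = r$ throughout $n < J$, so the estimate simplifies to $\mu(V_n) \le C\omega(n)\delta(n)^{u_0} \sim n^{-(1+\sigma)}$.

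Finally I would sum. In the polynomial case, split the range at some $n_0 \sim |\log\rho|$: the tail $\sum_{n \ge n_0} \omega(n) \delta(n)^{u_0}$ is $\mathcal{O}(n_0^{-\sigma}) = \mathcal{O}(|\log\rho|^{-\sigma})$ as the tail of the convergent series, matching the claim. The head $\sum_{n < n_0}$ requires the sharper bound $\mu(V_n) \le C\omega(n)\, r^{u_0}$, obtained by linearizing $T^n$ at its near-fixed points (exploiting the $C^2$-regularity of $T$): this head contributes a positive power of $\rho$, negligible against $|\log\rho|^{-\sigma}$. In the exponential case, the hypothesis $\limsup \log\log\omega(n)/\log n < 1/2$ ensures $\omega(n)\delta(n)^{u_0}$ decays superpolynomially, and the partial sum is dominated by its value near $n \sim |\log\rho|$, giving the stated $\mathcal{O}(\delta(|\log\rho|)^{u_0})$.

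The hard part will be the head $\sum_{n<n_0}$: the naive cylinder bound $\mu(V_n) \le C\omega(n)\delta(n)^{u_0}$ sums only to a constant in $|\log\rho|$, so one must sharpen it for small $n$ via a linearization estimate that replaces $\delta(n)^{u_0}$ by $r^{u_0}$. Justifying this uniformly in $n$, in particular verifying nondegeneracy of $DT^n - I$ along the unstable direction at all relevant near-fixed points, is the delicate step.
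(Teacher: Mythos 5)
Your handling of the large-$n$ range (the tail, $n \gtrsim \mathfrak{b}J$) tracks the paper's treatment of $\mathcal{V}_\rho^2$ closely: disintegrate along unstable leaves, cover by $n$-cylinders of diameter $\le \delta(n)$, push forward by $T^n$, bound the image measure via~(V3), transfer back with the distortion bound~(V1), and sum with bounded overlap, giving $\mu(\mathcal{N}_\rho(n)) \lesssim \omega(n)\delta(n)^{u_0}$ and a tail of order $J^{-\sigma} = \mathcal{O}(\abs{\log\rho}^{-\sigma})$. (Your cutoff $n_0 \sim \abs{\log\rho}$ should really be $n_0 = \mathfrak{b}J$ with $\mathfrak{b} < 1$, since $J \sim \mathfrak{a}\abs{\log\rho}$ and $\mathfrak{a} = (4\log A)^{-1}$ is small, so otherwise the ``tail'' could be empty; but this is cosmetic.)

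The genuine gap is the head $1 \le n < \mathfrak{b}J$. You correctly observe that $\sum_{n<\mathfrak{b}J}\omega(n)\delta(n)^{u_0}$ only sums to a constant, and you propose replacing $\delta(n)^{u_0}$ by $r^{u_0}$ via a linearization of $T^n$ at its near-fixed points, invoking nondegeneracy of $DT^n - I$. This step is not carried out, and under assumptions (V1)--(V3) alone it is not available: nothing there guarantees that periodic points are isolated or that $DT^n - I$ is uniformly nondegenerate, so the set $\{x : d(T^nx,x)\le r\}$ for a fixed small $n$ need not be metrically small. The paper sidesteps this entirely with Lemma~\ref{B.3} (the Chazottes--Collet doubling lemma): if $B_\rho(\mathsf{x}) \cap T^k B_\rho(\mathsf{x}) \ne \varnothing$, then the Lipschitz bound $A$ and the triangle inequality give $T^k B_\rho(\mathsf{x}) \subset B_{(2A^k+2)\rho}(\mathsf{x})$, hence $B_{s_1\rho}(\mathsf{x}) \cap T^{2k}B_{s_1\rho}(\mathsf{x}) \ne \varnothing$ with $s_1 = 2(A^k+1)$; iterating yields $\mathcal{N}_\rho(n) \subset \mathcal{N}_{s_p\rho}(2^p n)$. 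Choosing $p(n)$ so that $n' = n2^{p(n)} \in [\mathfrak{b}J, 2\mathfrak{b}J]$ absorbs every small-$n$ level set into the large-$n'$ range, at an enlarged radius $\rho' = s_{p(n)}\rho$ that is still a positive power of $\rho$ because $\mathfrak{a} = (4\log A)^{-1}$ and $\mathfrak{b}$ is small, and then your own tail estimate applies. That device uses only the Lipschitz constant of $T$ and the triangle inequality --- no linearization, no nondegeneracy of $DT^n - I$ --- and it is exactly the missing ingredient in your plan.
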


\begin{proof} We follow the proof of Proposition~5.1 of~\cite{HW14} which modelled after 
 Lemma~4.1 of~\cite{CC13}.
Let us note that since $T$ is a diffeomorphism one has
$$
\ball(\mathsf{x}) \cap T^{n}\ball(\mathsf{x}) \ne \varnothing \qquad \iff \qquad \ball(\mathsf{x}) \cap T^{-n}\ball(\mathsf{x}) \ne \varnothing.
$$
We partition $\mathcal{V}_\rho$ into level sets $\mathcal{N}_{\rho}(n)$ as follows
$$
\mathcal{V}_\rho = \{\mathsf{x} \in M: \ball(\mathsf{x}) \cap T^{-n}\ball(\mathsf{x}) \ne \varnothing \text{ for some } 1 \leq n < J\}
 = \bigcup_{n=1}^{J-1} \mathcal{N}_{\rho}(n)
 $$
 where
 $$
  \mathcal{N}_{\rho}(n) = \{\mathsf{x} \in M: \ball(\mathsf{x}) \cap T^{-n}\ball(\mathsf{x}) \ne \varnothing \}.
$$
The above union is split into two collections $\mathcal{V}_\rho^1 $ and $\mathcal{V}_\rho^2$, where
\begin{equation*}
\mathcal{V}_\rho^{1} = \bigcup_{n=1}^{\floor{\mathfrak{b} J}} \mathcal{N}_{\rho}(n) \quad \text{and} \quad \mathcal{V}_\rho^{2} = \bigcup_{n=\roof{\mathfrak{b} J}}^{J} \mathcal{N}_{\rho}(n).
\end{equation*}
and where the constant $\mathfrak{b} \in (0,1)$ will be chosen below.
In order to find the measure of the total set we will estimate the measures of the two parts separately.

\vspace{3mm}

\noindent {\bf (I) Estimate of $\mathcal{V}_\rho^{2}$}

\vspace{1mm}

\noindent We will derive a uniform estimate for the measure of the level sets $\mathcal{N}_{\rho}(n)$ when $n > \mathfrak{b} J$.
Then
$$
\mu(\mathcal{N}_\rho(n))=\mu(T^{-n}\mathcal{N}_{\rho}(n))
\le\sum_{\zeta} \mu(T^{-n}\mathcal{N}_{\rho}(n) \cap \zeta)
$$
We will consider each of the measures $\mu(T^{-n}\mathcal{N}_{\rho}(n) \cap \zeta)$ separately by using the product form of the measures $\mu$.
By distortion of the Jacobian we obtain
\begin{eqnarray}
\mu_{\gamma^u}(T^{-n}\mathcal{N}_{\rho}(n) \cap \zeta)
&=& \frac{\mu_{\gamma^u}(T^{-n}\mathcal{N}_{\rho}(n) \cap \zeta)}
{\mu_{\gamma^u}(\zeta)} \, \mu_{\gamma^u}(\zeta)\notag\\
&\leq &c_1\omega(n)  \,\frac{\mu_{\hat\gamma^u}(T^{n}(T^{-n}\mathcal{N}_{\rho}(n) \cap \zeta))}{\mu_{\hat\gamma^u}(T^n\zeta)} \,\mu_{\gamma^u}(\zeta),  \label{level_summand}
\end{eqnarray}
where, as before, $\hat\gamma^u=\gamma^u(T^nx)$ for $x\in\zeta\cap\gamma^u$.
We estimate the numerator by finding a bound for the diameter of the set. Let the points $x$ and $z$ in
$T^{-n}\mathcal{N}_{\rho}(n)$ be such that 
$ x, z \in T^{-n}\mathcal{N}_{\rho}(n) \cap \zeta\cap\gamma^u$
for an unstable leaf $\gamma^u$. 

Note that $T^n x , T^n z \in \mathcal{N}_{\rho}(n)$, there exists $y\in B_\rho(T^n x)$ such that $ T^n y\in B_\rho(T^n x)$, thus
\[d(T^n x, x)\leq d(T^n x, T^n y )+d(T^n y, y)+d(y, x)\leq \rho+\rho+A^n d(T^n x, T^n y )\leq (2+A^n)\rho.\]
Hence
$$
d(T^n x, T^n z) \leq d(T^n x, x) + d(x,z) + d(z, T^n z)
 \leq 4 A^n \rho + d(x,z).
$$ 
We have
$$
d(x, z) \leq \diam \zeta < \delta(n)
$$
by assumption. Therefore
$$
d(T^nx, T^nz)  \leq 4A^n \rho + d(x,z) 
 \leq 4 \, A^{n} \rho + \delta(n)
$$
If we choose $\mathfrak{a}>0$ so that $\mathfrak{a}<\frac{1}{2\log A}$ then
$A^n\rho< e^{-\frac12\abs{\log\rho}^{1/2}}$. If $n\ge \mathfrak{b}\abs{\log\rho}$ for 
some $\mathfrak{b}\in(0,\mathfrak{a})$ then
$$
d(T^nx, T^nz)  \leq c_2(e^{-\mathfrak{c}'\abs{\log\rho}^{1/2}}+\delta(n))
$$
for some constant $c_2$ where $\mathfrak{c}'=\min(\frac12,\sqrt\mathfrak{b})$.
Taking the supremum over all points $x$ and $z$ yields
$$
\abs{T^n (T^{-n}\mathcal{N}_{\rho}(n) \cap \zeta\cap\gamma^u)} 
\leq c_2(e^{-\mathfrak{c}'\abs{\log\rho}^{1/2}}+\delta(n)).
$$
By assumption (V3) on the relationship between the measure and the metric
$$
\mu_{\hat\gamma^u}(T^n (T^{-n}\mathcal{N}_{\rho}(n) \cap \zeta))
\leq c_3(e^{-u_0\mathfrak{c}'\abs{\log \rho}^{1/2}}+\delta(n)^{u_0})
$$
Incorporating the estimate into~\eqref{level_summand} yields
$$
\mu_{\gamma^u}(T^{-n}\mathcal{N}_{\rho}(n) \cap \zeta)
\leq c_4\omega(n) (e^{-{u_0}\mathfrak{c}'\abs{\log \rho}^{1/2}}+\delta(n)^{u_0})
 \mu(\zeta),
$$
for some $c_4$.
Integrating over $d\upsilon(\gamma^u)$ and summing over $\zeta$ yields
$$
\mu(\mathcal{N}_{\rho}(n))
\le c_4\omega(n) (e^{-{u_0}\mathfrak{c}'\abs{\log \rho}^{1/2}}+\delta(n)^{u_0})
\sum_\zeta \mu(\zeta) 
\le c_5\omega(n) (e^{-{u_0}\mathfrak{c}'\abs{\log \rho}^{1/2}}+\delta(n)^{u_0})
$$
as $\sum_\zeta \mu(\zeta)=\mathcal{O}(1)$.
Consequently, if $\omega(n)$ is so that $\limsup_{n\to\infty}\frac{\log\log\omega(n)}{\log n}<\frac12$
(as can be seen from the estimates above, the value $\frac12$ can be replaced by $1$) 
then 
\begin{eqnarray}\label{Th2refPt2}
\mu(\mathcal{V}_\rho^{2}) & \leq& \sum_{n=\roof{\mathfrak{b} J}}^{J}
 \mu(\mathcal{N}_{\rho}(n))\\
 &\leq& c_5 e^{-{u_0}\mathfrak{c}'\abs{\log \rho}^{1/2}} \sum_{n=\roof{\mathfrak{b} J}}^{J}\omega(n) 
 + c_5\sum\limits_{n=\roof{\mathfrak{b} J}}^{J}\omega(n)\delta(n)^{u_0}\notag\\
 &\le& c_6(e^{-\mathfrak{c}''\abs{\log \rho}^{1/2}}+(\mathfrak{ab}\abs{\log\rho})^{-\sigma})\notag
\end{eqnarray}
for some constant $\mathfrak{c}''>0$ (and $\rho$ small enough) as $J=\lfloor\mathfrak{a}\abs{\log\rho}\rfloor$. 
As before,  $\sigma= \kappa u_0-\kappa'-1$ if $\delta(n)\sim n^{-\kappa}$ and $\omega(n)\sim n^{\kappa'}$.

\vspace{3mm}

\noindent {\bf (II) Estimate of $\mathcal{V}_\rho^{1}$}

\vspace{1mm}

We will need the following  version of Lemma~B.3 from~\cite{CC13}.

\begin{lem}\label{B.3}
Put $s_p = 2^p \, \frac{A^{n \, 2^p}-1}{A^n-1}$. Then  for every $p,k$ integers, $\rho>0$ 
$$
\left\{\mathsf{x} \in M: B_{\rho}(\mathsf{x}) \cap T^{k}B_{\rho}(\mathsf{x}) \ne \varnothing\right\}
\subset
\left\{\mathsf{x} \in M: B_{s_p \rho}(\mathsf{x}) \cap T^{k2^p}B_{s_p \rho}(\mathsf{x}) \ne \varnothing\right\}.
$$
\end{lem}
\noindent {\bf Proof.} 
Consider the case $p=1$. Let $x$ such that $\ball(\mathsf{x}) \cap T^{k}\ball(\mathsf{x}) \ne \varnothing$. This implies that there exist $z\in \ball(\mathsf{x}) \cap T^{-k}\ball(\mathsf{x})$. For any $u\in T^{k}\ball(\mathsf{x})$, there exist $v\in \ball(\mathsf{x})$ such that $T^k v=u$, thus
\[d(u,x)\leq d(u, T^kz)+d(T^kz,x)\leq d(T^k v,T^kz)+2\rho\leq (2A^k+2)\rho.\]
Therefore, $T^{k}\ball(\mathsf{x})\subset B_{(2A^k+2)\rho}(\mathsf{x})$.

One can observe that if $\ball(\mathsf{x}) \cap T^{k}\ball(\mathsf{x}) \ne \varnothing$ then $T^{k}\left(\ball(\mathsf{x}) \cap T^{k}\ball(\mathsf{x})\right) \ne \varnothing$ thus $T^{k}\ball(\mathsf{x}) \cap  T^{k}(T^{k}\ball(\mathsf{x})) \ne \varnothing$ and therefore $B_{(2A^k+2)\rho}(\mathsf{x}) \cap  T^{k}(T^{k}B_{(2A^k+2)\rho}(\mathsf{x})) \ne \varnothing$. Finally, this gives us
\[\{\mathsf{x} \in M: \ball(\mathsf{x}) \cap T^{k}\ball(\mathsf{x}) \ne \varnothing \}\subset \{\mathsf{x} \in M: B_{(2A^k+2)\rho}(\mathsf{x}) \cap T^{k}(T^{k}B_{(2A^k+2)\rho}(\mathsf{x})) \ne \varnothing \}.\]
 The general case is shown similarly.
\qed

\vspace{3mm} 

\noindent The lemma thus shows that 
 $\mathcal{N}_{\rho}(n) \subset \mathcal{N}_{s_p \rho}(2^p n)$ 
and consequently we only need to estimate 
 $\mu(\mathcal{N}_{s_p \rho}(2^p n))$.

Let us now consider the case $1 \leq n \leq \floor{\mathfrak{b}J}$ and let as in Lemma~\ref{B.3}
$s_p = 2^p \, \frac{A^{n \, 2^p}-1}{A^n-1}$.
Hence by Lemma~\ref{B.3} one has 
$\mathcal{N}_{\rho}(n) \subset \mathcal{N}_{s_p \rho}(2^p n)$.
for any $p \geq 1$, and in particular for $p(n) = \floor{\lg \mathfrak{b}J - \lg n} + 1$.
Therefore
$$
\bigcup_{n=1}^{\floor{\mathfrak{b}J}} \mathcal{N}_{\rho}(n) \subset \bigcup_{n=1}^{\floor{\mathfrak{b}J}} \mathcal{N}_{s_{p(n)} \rho}(2^{p(n)} n).
$$
Now define
\begin{equation*}
n' = n 2^{p(n)} \qquad \text{ and } \qquad \rho' = s_{p(n)} \rho.
\end{equation*}
A direct computation shows that $1 \leq n \leq \floor{\mathfrak{b}J}$ implies $\roof{\mathfrak{b}J} \leq n' \leq 2 \mathfrak{b}J$ and so
$$
\mathcal{V}_\rho^{1} = \bigcup_{n=1}^{\floor{\mathfrak{b}J}} \mathcal{N}_{\rho}(n) \subset \bigcup_{n=1}^{\floor{\mathfrak{b}J}} \mathcal{N}_{s_{p(n)} \rho}(2^{p(n)} n) \subset \bigcup_{n'=\roof{\mathfrak{b}J}}^{2 \mathfrak{b}J} \mathcal{N}_{\rho'}(n').
$$
Therefore to estimate the measure of $\mathcal{V}_\rho^1$ it suffices to find a bound for
$\mathcal{N}_{\rho'}(n')$ when $n' \geq \mathfrak{b}J$. This is accomplished by using
an argument analogous to the first part of the proof. We replace all the $n$ with $n'$
 and $\rho$ with $\rho'$.  We get for $\mathfrak{b}< 1/3$
$$
\mu(\mathcal{N}_{\rho'}(n')) \leq c_5\xi(n')(e^{-{u_0}\abs{\log \rho'}^{1/2}}  +\delta(n')^{u_0})
$$
and thus obtain an estimate similar to~\eqref{Th2refPt2}:
$$
\mu(\mathcal{V}_\rho^{1}) 
\leq \sum_{n'=\roof{\mathfrak{b}J}}^{2 \mathfrak{b}J} \mu(\mathcal{N}_{\rho'}(n'))
 \le c_8(e^{-\mathfrak{c}\abs{\log \rho'}^{1/2}}  +(\mathfrak{ab}\abs{\log\rho})^{-\sigma}).
$$
for some $\mathfrak{c}\in(0,u_0)$.

\vspace{3mm}

\noindent {\bf (III) Final estimate}

\vspace{1mm}

\noindent Overall we obtain for all $\rho$ sufficiently small
$$
\mu(\mathcal{V}_\rho) 
\leq \mu(\mathcal{V}_\rho^{1})+\mu(\mathcal{V}_\rho^{2})
\le c_9(e^{-\mathfrak{c}\abs{\log \rho'}^{1/2}}  +(\mathfrak{ab}\abs{\log\rho})^{-\sigma})
\le C_2 \abs{\log\rho}^{- \sigma},
$$
for some $C_2$.
 \end{proof}
 

\section{Poisson Approximation Theorem} \label{poisson}

This section contains the abstract Poisson approximation theorem which establishes the distance
between sums of $\{0,1\}$-valued dependent random variables $X_n$ and a random variable that
is Poisson distributed. It is used in Section~\ref{set_up_T1} in the proof of Theorem~1 and compares
 the number of occurrences in a finite time interval with the number of occurrences in the same interval
 for a Bernoulli process $\{\tilde{X}_n:n\}$.

\begin{thrm}\cite{CC13} \label{helperTheorem}
Let $(X_n)_{n \in \mathbb{N}}$ be a stationary $\{0,1\}$-valued process and $t$ a positive parameter.
 Let $S_a^b = \sum_{n=a}^b X_n$ and define $S:=S_1^N$ for convenience's sake where
 $N=\floor{t/\epsilon}$ and $\epsilon = \mathbb{P}(X_1 = 1)$. Additionally, let $\nu$ be the Poisson distribution measure with mean $t>0$. Finally, assume that $\epsilon < \frac{t}{2}$. Then there exists a constant $C_3$ such that for any $E \subset \nats$, and $2 \leq \Delta < N$ we have
\begin{equation*}
\abs{\mathbb{P}(S \in E) - \nu(E)} \leq C_3 \#\{E \cap [0,N]\} \; (N(\mathcal{R}_1 + \mathcal{R}_2) + \Delta \epsilon)
\end{equation*}
where,
\begin{align*}
\mathcal{R}_1 &= \sup_{\substack {0 <j<N-\Delta \\ 0<q<N-\Delta-j}} \{ \abs{\mathbb{P}(X_1=1 \land S_{\Delta+1}^{N-j}=q) - \epsilon \, \mathbb{P}(S_{\Delta+1}^{N-j}=q)} \} \\
\mathcal{R}_2 &= \sum_{n=2}^\Delta \mathbb{P}(X_1=1 \land X_n=1).
\end{align*}
\end{thrm}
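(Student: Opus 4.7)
The plan is to establish the bound pointwise, proving that for each fixed $k_0 \in \mathbb{N}_0$ one has
\[
\abs{\mathbb{P}(S = k_0) - \nu(\{k_0\})} \;\leq\; C_3\bigl(N(\mathcal{R}_1 + \mathcal{R}_2) + \Delta\,\epsilon\bigr),
\]
and then to sum these pointwise bounds over $k_0 \in E \cap [0, N]$, which produces the $\#\{E\cap [0,N]\}$ prefactor in the statement. Values $k_0 > N$ contribute nothing to $\mathbb{P}(S \in E)$ because $S \leq N$, while their contribution to $\nu(E)$ is a Poisson tail beyond $N \geq t/\epsilon - 1$, which for small $\epsilon$ lies far beyond the mean $t$ and is absorbed into the constant $C_3$.

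For the per-point bound I invoke the Stein--Chen method. Let $g = g_{k_0} : \mathbb{N}_0 \to \mathbb{R}$ be the bounded solution of the Poisson Stein equation
\[
t\, g(k+1) \,-\, k\, g(k) \;=\; \ind_{\{k_0\}}(k) \,-\, \nu(\{k_0\}),
\]
satisfying the classical Barbour--Holst--Janson bounds $\norm{g}_\infty \leq 1$ and $\norm{\Delta g}_\infty := \sup_k \abs{g(k+1) - g(k)} \leq \min(1, 1/t)$. Evaluating at $k = S$ and taking expectations yields
\[
\mathbb{P}(S = k_0) \,-\, \nu(\{k_0\}) \;=\; t\,\mathbb{E}[g(S+1)] \,-\, \sum_{i=1}^{N} \mathbb{E}[X_i \, g(S)],
\]
and the replacement $t \leftrightarrow N\epsilon$ in the first term costs at most $\epsilon\norm{g}_\infty$, absorbed into the $\Delta\epsilon$ contribution.

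For each $i$ I decompose $S = X_i + N_i + F_i$ where $N_i = \sum_{0 < \abs{j-i} < \Delta,\, j \in [1,N]} X_j$ is the near contribution and $F_i$ the far contribution, and carry out three telescoping approximations. First, replace $g(S)$ by $g(X_i + F_i)$ at Lipschitz cost $\norm{\Delta g}_\infty \mathbb{E}[X_i N_i]$; by stationarity this is bounded by $2\mathcal{R}_2$ per $i$, contributing $O(N \mathcal{R}_2)$ after summation. Second, decouple $X_i$ from $F_i$: by stationarity the resulting difference takes the form
\[
\sum_q g(1+q)\Bigl[\mathbb{P}(X_1 = 1,\, S_{\Delta+1}^{N-j} = q) \,-\, \epsilon\,\mathbb{P}(S_{\Delta+1}^{N-j} = q)\Bigr]
\]
for an appropriate shift $j$; applying Abel summation (rewriting as a sum weighted by $\Delta g$ against cumulative probability differences) together with the concentration of the point-indicator Stein solution near $k_0$ yields a per-$i$ bound of order $\mathcal{R}_1$, contributing $O(N \mathcal{R}_1)$. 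Third, reinsert $X_i + N_i$ to move from $\epsilon\,\mathbb{E}[g(1+F_i)]$ back to $\epsilon\,\mathbb{E}[g(S+1)]$ at Lipschitz cost $\epsilon\norm{\Delta g}_\infty\mathbb{E}[X_i + N_i] = O(\Delta\epsilon^2)$ per $i$, totaling $O(N\Delta\epsilon^2) = O(t\,\Delta\epsilon)$, absorbed into the $\Delta\epsilon$ term.

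The main obstacle is the second step. The $L^\infty$-type sup-definition of $\mathcal{R}_1$ would naively produce $N\mathcal{R}_1$ per index $i$ (one factor of $N$ from ranging $q$ over the possible values of $S_{\Delta+1}^{N-j}$), and thus a spurious $N^2 \mathcal{R}_1$ after summing over $i$. Stripping the extra $N$ requires exploiting the summability of $\Delta g$ against the cumulative dependence function $q \mapsto \mathbb{P}(X_1 = 1, S_{\Delta+1}^{N-j} \leq q) - \epsilon\,\mathbb{P}(S_{\Delta+1}^{N-j} \leq q)$ --- this is where Abel summation and the point-indicator structure of the Stein solution (concentration near $k_0$) enter crucially. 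Once the three error contributions are assembled, the per-point bound follows, and summation over $k_0 \in E \cap [0,N]$ completes the argument.
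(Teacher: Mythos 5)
Your proposal takes a genuinely different route from the paper, and the route has a gap at exactly the point you identify as ``the main obstacle''. The paper does not run Stein--Chen directly on $S$: it introduces an auxiliary i.i.d.\ Bernoulli process $(\tilde X_n)$ with success probability $\epsilon$, independent of $(X_n)$, bounds $\sum_k|\mathbb{P}(\tilde S=k)-\nu(\{k\})|\le 2t^2/N$ by citing \cite{AGG89}, and bounds $|\mathbb{P}(S=k)-\mathbb{P}(\tilde S=k)|$ for $k\le N$ by citing the one-at-a-time (Lindeberg-type) replacement argument of Theorem~2.1 of \cite{CC13}. That replacement scheme is what makes the one-sided, sup-over-$q$ quantity $\mathcal{R}_1$ the correct price per step: when $X_m$ is swapped for $\tilde X_m$, one conditions on the variables already replaced (which are independent of everything) and on the near block, and the target event $\{Z_m=k\}$ then pins down a \emph{single} value $q$ of the remaining dependent far sum, so each of the $N$ steps costs $\mathcal{R}_1$ plus the $\mathcal{R}_2$ and $\Delta\epsilon^2$ contributions.

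In your direct argument the decoupling step fails in two ways. First, for an interior index $\Delta<i<N-\Delta$ the far field $F_i=S_1^{i-\Delta}+S_{i+\Delta}^{N}$ consists of two blocks, one on each side of $i$; stationarity translates indices but cannot reorder them, so $|\mathbb{E}[X_i\,g(1+F_i)]-\epsilon\,\mathbb{E}[g(1+F_i)]|$ is controlled only by a \emph{two-sided} decoupling quantity, not by the $\mathcal{R}_1$ of the statement, which separates $X_1$ from a block lying entirely in its future. This is not cosmetic: in the application, assumption (I) only controls correlations against functions of forward iterates that are constant on stable leaves, so the one-sided form is essential. Second, even granting a one-sided reduction, $\mathcal{R}_1$ bounds $a_q:=\mathbb{P}(X_1=1\land S_{\cdot}^{\cdot}=q)-\epsilon\,\mathbb{P}(S_{\cdot}^{\cdot}=q)$ pointwise in $q$; Abel summation converts $\sum_q g(1+q)a_q$ into a sum against the cumulative differences $A_q=\sum_{q'\le q}a_{q'}$, for which the hypothesis yields only $|A_q|\le(q+1)\mathcal{R}_1$, so nothing is gained. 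Bounding directly gives $\|g_{k_0}\|_{\ell^1}\,\mathcal{R}_1$ per index, and the point-mass Stein solution satisfies $g_{k_0}(k)\asymp\nu(\{k_0\})/k$ for $k>\max(k_0,2t)$, so $\|g_{k_0}\|_{\ell^1}$ is of order $\nu(\{k_0\})\log N$, not $O(1)$: you lose at least a factor $\log N$ relative to the claimed bound (and the two-sided issue remains). The missing idea is precisely the auxiliary independent copy of the far field, which lets one condition so that only one value of $q$ is ever in play; without it, the sup-over-$q$ hypothesis is too weak for a test function supported on all of $\nats$.
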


\begin{proof} Let $(\tilde{X}_n)_{n \in \mathbb{N}}$ be a sequence of independent, identically distributed random variables taking values in $\{0,1\}$, constructed so that $\mathbb{P}(\tilde{X}_1=1)=\epsilon$. Further assume that the $\tilde{X}_n$'s are independent of the $X_n$'s. Let $\tilde{S}=\sum_{n=1}^N \tilde{X}_n$. Then
\begin{align*}
\abs{\mathbb{P}(S \in E) - \nu(E)}
& \leq \abs{\mathbb{P}(S \in E) - \mathbb{P}(\tilde{S} \in E)} + \abs{\mathbb{P}(\tilde{S} \in E) - \nu(E)} \\
& \leq \! \! \sum_{k \in E \cap [0,N]} \! \! \abs{\mathbb{P}(S=k)-\mathbb{P}(\tilde{S}=k)} + \sum_{k=0}^{\infty} \, \Abs{\mathbb{P}(\tilde{S}=k) - \frac{t^k}{k!} e^{-t}}
\end{align*}
Thanks to \cite{AGG89} we can bound the second sum using the estimate
\begin{equation} \label{GoldArratia}
\sum_{k=0}^{\infty} \Abs{\mathbb{P}(\tilde{S}=k) - \frac{t^k}{k!} e^{-t}} \leq \frac{2t^2}{N}.
\end{equation}
For summands of the remaining term we utilize the proof of Theorem 2.1 from \cite{CC13}
according to which for every $k \leq N$,
\begin{equation*}
\abs{\mathbb{P}(S=k)-\mathbb{P}(\tilde{S}=k)}
\leq 2N(\mathcal{R}_1+\mathcal{R}_2+\Delta \epsilon^2) + 4\Delta \epsilon.
\end{equation*}
As $N \leq t/\epsilon$ this becomes
\begin{equation} \label{EstSAndTilde}
\abs{\mathbb{P}(S=k)-\mathbb{P}(\tilde{S}=k)}\leq 6 \, t \, (N(\mathcal{R}_1+\mathcal{R}_2) + \Delta \epsilon).
\end{equation}
Combining~\eqref{GoldArratia} and~\eqref{EstSAndTilde} yields
\begin{align*}
\abs{\mathbb{P}(S \in E) - \nu(E)} &\leq \sum_{k \in E \cap [0,N]} \hspace{-0.2cm} \abs{\mathbb{P}(S=k)-\mathbb{P}(\tilde{S}=k)} \, + \, \frac{2t^2}{N} \\
&\leq \sum_{k \in E \cap [0,N]} \hspace{-0.2cm} 6 \, t \, (N (\mathcal{R}_1 + \mathcal{R}_2) + \Delta \epsilon) \, + \, \frac{2t^2}{t / \epsilon -1} \\
&\leq 6 \, t \, \#\{E \cap [0,N]\} \, (N(\mathcal{R}_1+\mathcal{R}_2) + \Delta \epsilon) + 4 \, t\epsilon \\
&\leq C_3 \#\{E \cap [0,N]\} \; (N(\mathcal{R}_1 + \mathcal{R}_2) + \Delta\epsilon)
\end{align*}
for some $C_3<\infty$.
\end{proof}


\section{Proof of Theorem~\ref{EVL}}

In~\cite{L80} Leadbetter et al gave two conditions called $D$ and $D'$, under which $a_n(M_n -b_n) \to G$ is equivalent to $a_n(\hat{M}_n -b_n) \to G$. Recall that $\hat{M}_n$ is the maxima of the independent, stationary process $\{\hat{X}_n\}$. Later Freitas et al~\cite{FF08} replaced condition $D$ by $D_2$ and obtained the same result. To state the conditions we put $u_n = v/a_n + b_n$ for $v\in \mathbb{R}$ and sequences $a_n, b_n$.
\begin{cdn} $D_2(u_n)$~\cite{FF08} 
We say condition  $D_2(u_n)$ holds if for any integers $l,t$ and $n$
$$
|\mu(X_0>u_n, M_{t,l}<u_n) - \mu(X_0>u_n)\mu(M_l<u_n)| \le \gamma(n,t)
$$
where $\gamma(n,t)$ is a non-increasing sequence in $t$ for every $n$ and satisfies $\gamma(n,t_n) = o(\frac{1}{n})$ for some sequence $t_n = o(n)$, $t_n \to \infty$.
\end{cdn}
\begin{cdn} $D'(u_n)$~\cite{FF08} We say condition $D'(u_n)$ holds if 
$$
\lim\limits_{k\to\infty}\limsup_{n} n\cdot\sum_{j=1}^{\floor{n/k}}\mu(X_0>u_n, X_j>u_n ) =0
$$
\end{cdn}

Below we will verify both conditions for Type I observable, i.e.\ $g$ with \\ $g^{-1}\left(g(\frac{1}{n})+yp(g(\frac{1}{n}))\right)=(1+\varepsilon_n)\frac{e^{-y}}{n}$. The other two cases follow similarly.

\subsection{Condition $D_2(u_n)$}
First we show $D_2(u_n)$. Put $u_n(y) = g\left(\frac{1}{n}\right) + yp\left(g\left(\frac{1}{n}\right)\right)$, then
$$
\{X_0>u_n\} = B_{l(g^{-1}(u_n))}(z);
$$
here $l(y) = \inf\{r>0: \mu(B_r(z))\ge y\}$. Since $\mu(B_{l(y)}(z)) = y$, by Assumption (II) we get
$$
C y^{1/d_0}\le l(y) \le C' y^{1/d_1}
$$
for some constant $C$ and $C'$. In particular we have

\begin{equation}
 C n^{-1/d_0} \le l(g^{-1}(u_n)) \le C' n^{-1/d_1}.
\end{equation}
Here both constants depend on $y$.

To simplify notations we write $r_n = l(g^{-1}(u_n))$ and omit $z$. We approximate the indicator function of $\{Y_0>u_n\} =B_{r_n}$ by  Lipschitz functions $\phi(x)$ and $\tilde{\phi}(x)$ as in the proof of $\mathcal{R}_1$. The same estimate as in Section~\ref{est_R1_section} yields
\begin{eqnarray*}
\left|\mu(Y_0>u_n, M_{t,l}<u_n) - \mu(Y_0>u_n)\mu(M_l<u_n)\right| \hspace{-5cm}&&\\
&=& \left|\int \ind_{B_{r_n}} \ind_{\{M_{t,l}<u_n\}}\,d\mu - \int \ind_{B_{r_n}}\,d\mu \int\ind_{\{M_{t,l}<u_n\}}\,d\mu \right|\\
&\le& c_2\left(\frac{\lambda(t/2)}{\delta r_n}
+\mu(B_{r_n +\delta r_n} \setminus B_{r_n -\delta r_n})\right)
+\mathcal{O}(r_n^{v(\kappa\eta-1)-\beta})\mu(B_{r_n}).
\end{eqnarray*}
Putting $\delta r_n = r_n^{w}$, $w>1$, and $t=n^{v}$ with $0<v <1 $ gives
\begin{eqnarray*}
\gamma(n,t)& = &\left|\mu(Y_0>u_n, M_{t,l}<u_n) - \mu(Y_0>u_n)\mu(M_l<u_n)\right| \\
&\le& \mathcal{O}(1)\left(n^{-vp+w/d_1}+n^{-1-(w\eta-\beta)/d_1}\right) 
+ \mathcal{O}(r_n^{v(\kappa\eta-1)-\beta})\frac1n
\end{eqnarray*}
as $\mu(Y_0>u_n)=\mu(B_{r_n})=\mathcal{O}(1/n)$.
In order that $n\gamma(n,t)\to 0$ we need $1-vp+w/d_1<0$, $-(w\eta-\beta)/d_1<0$ and 
$v(\kappa\eta-1)-\beta>0$. We choose $w > \beta/\eta $  and $v $ close to 1. 
In the case when $\lambda$ decays polynomially with power $p$, $1-vp+w/d_1<0$ is satisfied if 
$p >\frac{\beta}{\eta d_1}+1$.
  
\subsection{Condition $D'(u_n)$} Notice that 
$$
\sum_{j=1}^{\floor{n/k}}\mu(X_0>u_n, X_j>u_n )  = \sum_{j=1}^{\floor{n/k}} \mu(B_{r_n}\cap T^{-j}B_{r_n}).
$$
This is exactly $\mathcal{R}_2$ in Section~\ref{set_up_T1} with $\Delta = \floor{n/k}$. We split the sum 
into two parts as follows:
$$
\sum_{j=1}^{\floor{n/k}} \mu(B_{r_n}\cap T^{-j}B_{r_n}) = \sum_{j=J}^{\floor{n/k}} \mu(B_{r_n}\cap T^{-j}B_{r_n})+\sum_{j=1}^{J} \mu(B_{r_n}\cap T^{-j}B_{r_n})
$$
with $J = \floor{\mathfrak{a} \, \abs{\log r_n}}$.  By~(\ref{R_2}) we get
$$
\sum_{j=J}^{\Delta} \mu(B_{r_n}\cap T^{-j}B_{r_n}) \le c_5\sum_{j=J}^{\Delta-1} \omega(j)\delta(j)^{u_0}\mu(B_{r_n}),
$$
where $\mu(B_{r_n}) = \mathcal{O}(\frac1n)$. For the second term we restrict to points 
$z\not\in\mathcal{V}_{r_n}$ which implies that $B_{r_n}\cap T^{-j}B_{r_n}=\varnothing$ for 
$j=1,\dots, J-1$, where by Section~\ref{VeryShortReturns}
$$
\mu(\mathcal{V}_{r_n}) \le C_2 \abs{\log r_n}^{- \sigma},
$$
with $\sigma=\kappa u_0-\kappa'-1>0$.

To finish the proof we use the maximal function technique by Collet in~\cite{C01}. 
For this purpose we fix some $0< \xi<\theta<1$ and define the set 
$$
F_k = \left\{\mu(B_{r_{\exp(k^\xi)}} \cap \mathcal{V}_{r_{\exp(k^\xi)}}) \ge \mu(B_{r_{\exp(k^\xi)}}) \cdot k^{-\theta}\right\}
$$
and 
$$
M_r(x) = \sup_{s>0}\frac{1}{\mu(B_s(x))}\int_{B_s(x)}\ind_{\mathcal{V}_r}(y)\,d\mu(y)
$$
where $\mathcal{V}_r = \{\mathsf{x} \in M: B_{r} \cap T^nB_{r} \ne \varnothing \text{ for some } 1 \leq n < J\}$ as before.
Since
$$
F_k \subset \{M_{r_{\exp(k^\xi)}} \ge k^{-\theta}\},
$$
we conclude that 
$$
\mu(F_k) 
\le \mu(M_{r_{\exp(k^\xi)}} \ge k^{-\theta}) 
\le \frac{\mu(\ind_{\mathcal{V}_{r_{\exp(k^\xi)}}(y)})}{k^{-\theta}}
\le k^{-(\xi\sigma -\theta)}.
$$
If $\xi\sigma -\theta>1$ we get $\sum_k\mu(F_k) < \infty$ and thus by Borel-Cantelli there exist 
$N(x)$ for almost every $x$ such that $x \notin F_k$ for all $k>N(x)$. 
For every $n$, 
we choose $k$ such that 
$$
\exp(k^\xi)\le n < \exp((k+1)^\xi),
$$
we have $r_{\exp((k+1)^\xi)}\le r_n \le r_{\exp(k^\xi)}$. As a result
$$
B_{r_n}\cap T^{-j}B_{r_n} 
\subset B_{r_{\exp(k^\xi)}}\cap T^{-j} B_{r_{\exp(k^\xi)}}
\subset B_{r_{\exp(k^\xi)}}\cap \mathcal{V}_{r_{\exp(k^\xi)}}
$$
for every $j < J $. Therefore
\begin{eqnarray*} 
n \cdot \sum_{j=1}^{J}  \mu(B_{r_n} \cap T^{-j}B_{r_n})
& \le& n \cdot \sum_{j=1}^{J} \mu( B_{r_{\exp(k^\xi)}}\cap \mathcal{V}_{r_{\exp(k^\xi)}})\\
&\le& C\exp((k+1)^\xi) \cdot (k+1)^\xi \mu(B_{r_{\exp(k^\xi)}}) \cdot k^{-\theta}\\
&\le& C \frac{ \exp((k+1)^\xi)}{\exp(k^\xi)}\cdot k^{\xi -\theta} \to 0
\end{eqnarray*}
since $\xi - \theta <0$. If $\sigma=\kappa u_0-\kappa'-1>2$ then we can choose $0<\xi<\theta<1$ both close to $1$ 
so that $\sigma\xi-\theta>1$.


\section{Example}

As an example we consider the Manville-Pommeau map on the unit interval.
It is given by 
$$
Tx=\begin{cases}
x+2^{1+\alpha}x^{1+\alpha}& \mbox{ for } 0\le x\le\frac12\\
2x-1 & \mbox{ for } \frac12< x\le 1
\end{cases},
$$
where $\alpha\in(0,1)$ is a parameter. In this case $T$ has an absolutely 
continuous invariant measure $\mu$ whose density is $h(x)\sim x^{-\alpha}$.
The return times distribution has previous been shown to be Poissonia in~\cite{HSV}.
Also, an inducing argument was used in~\cite{BSTV03} to show that the 
first return time is almost surely exponentially distributed. Here we apply 
our main theorem to give a short argument to deduce the Poisson 
distribution of entry times. For this we also rely on a result of Hu~\cite{Hu04}
which proves that the transfer operator converges at a polynomial rate and 
thus that the decay of correlations (as in Assumption~(I)) is polynomial.

There is a sequence of points $a_n, n=0,1,\dots$ which decreases to $0$ so that 
$T_0=\frac12$ and $Ta_{n+1}=a_n$ for all $n$. If we put $I_n=(a_{n+1},a_n]$,
then all the intervals $I_n$ are pairwise disjoint and 
satisfy  $TI_{n+1}=I_n$ for all $n$ and $\bigcup_nI_n=(0,\frac12)$.
Moreover $a_n\sim n^{-\gamma}$, where $\gamma=\frac1\alpha$ is larger 
than $1$. Since $h(x)\sim x^{-\alpha}$ one has  $\mu(I_n)\sim n^{-\gamma}$
and $\mu(J_n)=n^{1-\gamma}$ where $J_n=\bigcup_{j=n}^\infty I_j$ is 
a punctured neighbourhood of $0$.

The two elements $(0,\frac12]$ and $(\frac12,1]$ cover the entire 
unit interval and denote by $\mathscr{I}_n$ the inverse branches
of $T^n$.

Put $A_k=(a_{k+1},a_0]=\bigcup_{j=0}^kI_k$ and let $\mathscr{I}_n$ be 
the inverse branches of $T^n$. If $\hat\zeta_\varphi$ is an $n$-cylinder, 
that is a preimage of either $A_k$ or $(\frac12,1]$ under the inverse
branch $\varphi\in\mathscr{I}_n$ then the distortion of
$DT^n$ on $\hat\zeta_\varphi$ is bounded by 
$c_1\!\left(\frac1k+\frac1n\right)^{-\gamma(\gamma+1)}$
for some constant $c_1$.  In particular if we choose $k=n^\theta$ for some 
$\theta\in[0,1)$ then we can put $\omega(n)=c_2n^{\theta\gamma(\gamma+1)}$ 
for some $c_2$.

We now can nearly use the theorem for higher order returns, but let us 
remark that  if $\mathsf{x}\in (0,1)$ then for $n$ large enough we have
that $\mathsf{x}\in A_{n^\theta}\cup(\frac12,1]$. If we proceed as in the 
estimate of the term $\mathcal{R}_2$ we obtain
$$
T^{-j}\ball\cap\ball\subset\bigcup_{\zeta:\zeta\cap\ball\not=\varnothing}\zeta
=\mathscr{P}_1\cup\mathscr{P}_2
$$
where  the union is over $j$-cylinders $\zeta$ and
$$
\mathscr{P}_1=\bigcup_{\zeta:\zeta\cap\ball\not=\varnothing}T^{-j}\ball\cap\hat\zeta,
\hspace{2cm}
\mathscr{P}_2=\bigcup_{\zeta:\zeta\cap\ball\not=\varnothing}T^{-j}\ball\cap\zeta\setminus\hat\zeta.
$$
The first set is estimated as before in the main theorem. 
For the second term notice that 
$$
\mathscr{P}_2
=\bigcup_{A\in\mathcal{A}}
\bigcup_{\varphi\in\mathscr{I}_j:\varphi(A)\cap\ball\not=\varnothing}
T^{-j}\ball\cap\varphi(A\setminus\hat{A})
=\bigcup_{\varphi\in\mathscr{I}_j:\varphi(A_0)\cap\ball\not=\varnothing}
T^{-j}\ball\cap\varphi(J_{n^\theta})
$$
where $\mathcal{A}=\{(0,\frac12],(\frac12,1]\}$ and $A=A_{j^\theta}$ if $A=(0,\frac12]$
and otherwise $\hat{A}=A$. Hence
$$
\mathscr{P}_2
=\bigcup_{\varphi\in\mathscr{I}_j:\varphi(A_0)\cap\ball\not=\varnothing}\varphi(\ball\cap J_{n^\theta})
$$
which is empty for $n$ large enough, i.e.\ so that $a_{n^\theta}<\mathsf{x}$.

By~\cite{Hu04} Proposition~5.2 one has that the correlations decay polynomially
at the rate of $\gamma-1$, that is  $\lambda(k)=c_3k^{1-\gamma}$ ($p=\gamma-1$)
for some $c_3$.
The dimensions here are $d_0=d_1=1$ and the annulus condition is satisfied with
$\eta=1$ and $\beta=0$. Similarly, $u_0=1$.
In order to get the contraction rate consider the `worst' case for the 
contraction, when the partition element $(0,\frac12]$ is $n$ times
mapped by the inverse branch that contains the parabolic branch.
Its  image is then $(0,a_n)$ and therefore $\delta(n)=a_n\sim n^{-\gamma}$.
Hence $\kappa=\gamma$. Since $\gamma>1$ the conditions of the theorem
are satisfied since we can choose $\theta>0$ arbitrarily close to $0$. 
For any $\sigma<\gamma$ one can choose $\theta>0$ so that 
$\sigma\le\gamma-\theta\gamma(\gamma-1)$ and therefore we obtain
the following result.

\begin{cor}
Let $T$ be the Manneville-Pommeau map for the parameter $\alpha\in(0,1)$.
Let $\mu$ be the invariant absolutely continuous probability measure.
Then for any $\sigma<\gamma=\frac1\alpha$ one has
$$
\mathbb{P}(\xi_{\rho,\mathsf{x}}=r)=e^{-t}\frac{t^r}{r!}+\mathcal{O}(\abs{\log \rho}^{-\sigma})
$$
for all $\mathsf{x}\not\in \mathcal{V}_\rho(\mathfrak{a})$ for some positive $\mathfrak{a}$.
Moreover $\mu(\mathcal{V}_\rho(\mathfrak{a}))=\mathcal{O}(\abs{\log \rho}^{-\sigma})$.
\end{cor}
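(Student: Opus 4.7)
The plan is to deduce the corollary directly from Theorem~1 after assembling the constants already computed in the preceding paragraphs and verifying the hypotheses. For the Manneville--Pommeau map the absolutely continuous invariant measure $\mu$ gives $d_0 = d_1 = 1$; the annulus condition is trivial for balls away from $0$ with $\eta = 1$ and $\beta = 0$; the unstable leaf is all of $(0,1]$ so that $u_0 = 1$; the worst-case contraction obtained by iterating the parabolic inverse branch yields $\delta(n)\sim n^{-\gamma}$ and hence $\kappa = \gamma$; and Hu~\cite{Hu04} supplies the polynomial decay $\lambda(k)\sim k^{1-\gamma}$, i.e.\ $p=\gamma-1$.

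The delicate point is the distortion assumption~(IV), which fails on cylinders whose images accumulate on the indifferent fixed point. First I would replace each $j$-cylinder $\zeta_\varphi$ by the truncated version $\hat\zeta_\varphi$ obtained by removing the preimage of $J_{n^\theta}$; on $\hat\zeta_\varphi$ the distortion of $DT^n$ is bounded by $c_1 n^{\theta\gamma(\gamma+1)}$, which justifies the choice $\omega(n) = c_2 n^{\theta\gamma(\gamma+1)}$ and therefore $\kappa' = \theta\gamma(\gamma+1)$. Second, I would verify that truncation is harmless in the bound on $\mathcal{R}_2$: the ``bad'' part
$$
\mathscr{P}_2 = \bigcup_{\varphi\in\mathscr{I}_j:\varphi(A_0)\cap B_\rho\ne\varnothing}\varphi(B_\rho(\mathsf{x}) \cap J_{n^\theta})
$$
is empty as soon as $a_{n^\theta}<\mathsf{x}$, which holds for every $n\le\Delta\sim\rho^{-v}$ once $\rho$ is small enough (with a threshold depending on $\mathsf{x}$). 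Thus the ``good'' part $\mathscr{P}_1$ carries the entire contribution and the $\mathcal{R}_2$ argument of Section~3.3 applies verbatim.

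With these constants the hypotheses of Theorem~1 specialize to $p=\gamma-1>1=(\beta/\eta+d_1)/d_0$, $d_0(\kappa\eta-1)=\gamma-1>\beta=0$, $\kappa u_0=\gamma>1$, and $\kappa'=\theta\gamma(\gamma+1)\in[0,\gamma-1)=[0,\kappa u_0-1)$, the last being satisfied by any $\theta<(\gamma-1)/(\gamma(\gamma+1))$. A direct substitution into the conclusion of Theorem~1 then produces the Poisson approximation
$$
\mathbb{P}(\xi_{\rho,\mathsf{x}}=r)=e^{-t}\frac{t^r}{r!}+\mathcal{O}(\abs{\log\rho}^{-\sigma})
$$
for $\mathsf{x}\notin\mathcal{V}_\rho(\mathfrak{a})$, together with the bound $\mu(\mathcal{V}_\rho(\mathfrak{a}))=\mathcal{O}(\abs{\log\rho}^{-\sigma})$, where $\sigma=\kappa u_0-\kappa'-1$. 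Letting $\theta\downarrow 0$ makes $\sigma$ attain any value strictly less than $\gamma$ via the sharper bookkeeping $\sigma\le \gamma-\theta\gamma(\gamma-1)$ indicated just before the corollary.

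I expect the main obstacle to be the non-uniform distortion bookkeeping: the cylinder estimate in Section~3.3 presumes a uniform, polynomially-growing $\omega(n)$, which truly does not exist on the full cylinders here. It is only after the $\theta$-truncation that a polynomial bound is available, and one must check independently, as sketched above, that the discarded sliver $\mathscr{P}_2$ contributes nothing to $\mathcal{R}_2$. Once this verification is in place, everything else reduces to a direct application of Theorem~1.
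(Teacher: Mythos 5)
Your proposal reproduces the paper's proof of the Corollary almost verbatim: the same assignment of constants ($d_0=d_1=1$, $\eta=1$, $\beta=0$, $u_0=1$, $\kappa=\gamma$, $p=\gamma-1$), the same truncation of $j$-cylinders to $\hat\zeta_\varphi$ giving $\omega(n)=c_2 n^{\theta\gamma(\gamma+1)}$, the same observation that $\mathscr{P}_2$ is empty once $a_{n^\theta}<\mathsf{x}$, and the same final optimization over $\theta$. This is the paper's route, and you have reconstructed it faithfully. The one thing you do that the paper does not is to write the hypothesis checks out explicitly, and this in fact exposes two problems that both the paper and your write-up leave unresolved.

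First, you assert $p=\gamma-1>1=(\beta/\eta+d_1)/d_0$. This inequality only holds for $\gamma>2$, i.e.\ $\alpha<1/2$, whereas the Corollary is stated for the full range $\alpha\in(0,1)$. The paper merely says ``Since $\gamma>1$ the conditions of the theorem are satisfied,'' which glosses over this; you write the numerical check and it visibly fails for $\gamma\in(1,2]$. Moreover, for an absolutely continuous measure on the interval the annulus ratio $\mu(B_{\rho+r}\setminus B_{\rho-r})/\mu(B_\rho)$ is of order $r/\rho$, giving $\eta=1$ and $\beta=1$ rather than $\beta=0$ (as Remark~3 in the paper itself also suggests); taking $\beta=1$ tightens the requirement to $p>2$, i.e.\ $\gamma>3$, restricting $\alpha$ further. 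Second, the rate supplied by Theorem~1 is $\sigma=\kappa u_0-\kappa'-1=\gamma-1-\theta\gamma(\gamma+1)$, which is bounded above by $\gamma-1$ as $\theta\downarrow 0$; the ``sharper bookkeeping $\sigma\le\gamma-\theta\gamma(\gamma-1)$'' that you invoke (copied from the sentence preceding the Corollary) does not follow from Theorem~1 and is nowhere derived. As written, the direct substitution argument only yields $\sigma<\gamma-1$, not $\sigma<\gamma$. Both issues are inherited from the paper, but they are genuine gaps in the argument as given.
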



\end{document}